\titleformat{\subsection}[runin]
{\bfseries} {\thesubsection{.}}{0.15cm}{}[.]
\titleformat{\subsubsection}[runin]
{\em}{\thesubsubsection{.}}{0.15cm}{}[.]
\newtheorem{theorem}{Theorem}[section]
\newtheorem{lemma}[theorem]{Lemma}
\newtheorem{claim}[theorem]{Claim}
\newtheorem{corollary}[theorem]{Corollary}
\theoremstyle{definition}
\newtheorem{definition}[theorem]{Definition}
\newtheorem{remark}[theorem]{Remark}
\numberwithin{equation}{section}
\numberwithin{figure}{section}
\newcommand\Ccal{\mathcal{C}}
\newcommand\Hcal{\mathcal{H}}
\newcommand\Rcal{\mathcal{R}}
\newcommand\Tcal{\mathcal{T}}
\newcommand\Ascr{\mathscr{A}}
\newcommand\Cscr{\mathscr{C}}
\newcommand\Oscr{\mathscr{O}}
\newcommand\Pscr{\mathscr{P}}
\newcommand\C{\mathbb{C}}
\newcommand\D{\overline{\mathbb D}}
\newcommand\CP{\mathbb{CP}}
\renewcommand\D{\mathbb D}
\newcommand\N{\mathbb{N}}
\newcommand\R{\mathbb{R}}
\renewcommand\S{\mathbb{S}}
\newcommand\Z{\mathbb{Z}}
\renewcommand\c{\mathbb{C}}
\renewcommand\d{\mathbb D}
\newcommand\h{\mathbb{H}}
\newcommand\n{\mathbb{N}}
\renewcommand\r{\mathbb{R}}
\newcommand\s{\mathbb{S}}
\newcommand\z{\mathbb{Z}}
\newcommand\igot{\mathfrak{i}}
\renewcommand\igot{\mathfrak{i}}
\newcommand\wt{\widetilde}
\newcommand\wh{\widehat}
\newcommand\dist{\mathrm{dist}}
\newcommand\length{\mathrm{length}}
\newcommand\CMC{\mathrm{CMC\text{-}1}}
\newcommand{\ord}{{\rm ord}}
\def\dist{\mathrm{dist}}
\def\length{\mathrm{length}}
\begin{document}


\fancyhead[LO]{Complete CMC-1 surfaces in hyperbolic space}
\fancyhead[RE]{A.\ Alarc\'on, I.\ Castro-Infantes, and J.\ Hidalgo}
\fancyhead[RO,LE]{\thepage}

\thispagestyle{empty}



\begin{center}
{\bf\Large Complete CMC-1 surfaces in hyperbolic space\\ \smallskip with arbitrary complex structure
} 

\bigskip

%
%
{\bf Antonio Alarc\'on,
Ildefonso Castro-Infantes, and
Jorge Hidalgo}
\end{center}

\smallskip

\begin{center}
{\em Dedicated to Francisco J.\ L\'opez for his sixtieth birthday}
\end{center}

\medskip

%
%

\bigskip

\begin{quoting}[leftmargin={7mm}]
{\small
\noindent {\bf Abstract}\hspace*{0.1cm}
We prove that every open Riemann surface $M$ is the 
complex 
structure of a complete surface of constant mean curvature $1$ ($\CMC$) in the 3-dimensional 
hyperbolic space $\h^3$. We go further and establish a 
jet interpolation theorem for complete conformal $\CMC$ immersions $M\to \h^3$. As a consequence, we show the existence of complete densely immersed $\CMC$ surfaces in $\h^3$ with arbitrary complex structure.
We obtain these results as application of a  
uniform approximation theorem with 
jet interpolation for holomorphic null curves in $\c^2\times\c^*$ which is also established in this paper.

\noindent{\bf Keywords}\hspace*{0.1cm} 
Constant mean curvature surface, Bryant surface, regular end, Riemann surface, holomorphic null curve,  minimal surface.


\noindent{\bf Mathematics Subject Classification (2020)}\hspace*{0.1cm} 
53A10, 
53C42, 
32H02, 
32E30. 
}
\end{quoting}



\section{Introduction}
\label{sec:intro}

\noindent  In this paper we shall establish that every open Riemann surface $M$ is the underlying complex structure of a complete surface of constant mean curvature $1$ ($\CMC$) in the 3-dimensional hyperbolic space $\h^3$ (see Corollary \ref{co:intro-dense}).   We prove more: our first main result is the following 
jet interpolation theorem for complete conformal $\CMC$ immersions $M\to\h^3$ with some additional control on the asymptotic behavior. 
%
%
\begin{theorem}\label{th:intro-Bryant}
Let $M$ be an open Riemann surface and $\Lambda$ and $E$ be a pair of disjoint closed discrete subsets of $M$. Given a conformal $\CMC$ immersion $\varphi: U\to\h^3$ on a neighborhood $U\subset M$ of $\Lambda$ and maps $k:\Lambda\to\n=\{1,2,3,\ldots\}$ and $m: E \to\n$ such that $m$ only assumes odd values, there is a complete conformal $\CMC$ immersion $\psi: M\setminus E\to\h^3$ satisfying the following conditions:
\begin{enumerate}[label= \rm (\roman*)]
\item \label{prop1} $\psi$ and $\varphi$ have a contact of order $k(p)$ at every point $p\in\Lambda$.

\item \label{prop2} For each point $p\in E$ the end of $\psi$ corresponding to $p$ is of finite total curvature, regular, and of multiplicity $m(p)$.

\item \label{prop3} For each point $p\in m^{-1}(1)\subset E$ the end of $\psi$ corresponding to $p$ is smooth.
\end{enumerate}
Furthermore, $\psi: M\setminus E\to\h^3$ can be chosen to be an almost proper map.
\end{theorem}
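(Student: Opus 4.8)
The plan is to transport the whole statement to the language of holomorphic null curves through the Bryant correspondence and then to feed it into the uniform approximation theorem with jet interpolation for null curves in $\c^2\times\c^*$. Recall that every conformal $\CMC$ immersion of a Riemann surface $N$ into $\h^3\cong SL(2,\c)/SU(2)$ is locally of the form $\psi=FF^*$ for a holomorphic null immersion $F$ into $SL(2,\c)$, and that such an $F$ integrates to a globally well defined immersion on $N$ exactly when its monodromy representation is valued in $SU(2)$, since $FF^*=(FA)(FA)^*$ for every $A\in SU(2)$. The induced metric (hence completeness), the order of contact between two immersions, and the asymptotic type of an end are all read off from $F$ and its Weierstrass data. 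Passing to the coordinates in which the null-curve theorem is phrased---the $\c^*$-factor recording the nonvanishing of a distinguished coordinate of the frame---the task becomes the construction of a null curve $F\colon M\setminus E\to\c^2\times\c^*$ with prescribed $k(p)$-jets at the points of $\Lambda$, prescribed singular behaviour at the points of $E$, complete induced metric, $SU(2)$-monodromy, and almost properness.

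First I would fix the initial data on a neighbourhood of $\Lambda\cup E$. Shrinking $U$ if necessary, $\varphi$ is represented near $\Lambda$ by a null curve $F_\varphi$, and the requirement that $\psi$ and $\varphi$ have contact of order $k(p)$ at $p\in\Lambda$ becomes the condition that the $k(p)$-jet of $F$ agree with that of $F_\varphi$ at $p$, up to the right $SU(2)$-ambiguity that leaves $FF^*$ unchanged; this is exactly a jet-interpolation condition. Near each puncture $p\in E$ I would insert an explicit local model of a complete, regular, finite-total-curvature $\CMC$ end of multiplicity $m(p)$. Here the hypothesis that $m(p)$ be odd enters decisively: it is precisely the parity for which the model end has monodromy in $SU(2)$ and therefore closes up as a genuine end in $\h^3$; when $m(p)=1$ the model is an embedded end, which gives the smoothness asserted in \ref{prop3}. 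Since $\Lambda$ and $E$ are disjoint closed discrete sets, these pieces live on disjoint (punctured) disks and assemble into a single null curve carrying the prescribed jets and ends, to be kept essentially fixed in what follows.

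Next I would globalize by the standard recursive scheme. Choose a normal exhaustion $M_1\Subset M_2\Subset\cdots$ of $M$ by smoothly bounded Runge domains with $\Lambda\cup E\subset M_1$, and at the $j$-th step apply the null-curve approximation-and-interpolation theorem to extend the current curve from $M_j\setminus E$ to $M_{j+1}\setminus E$, approximating as closely as desired on $M_{j-1}$ while holding fixed the $k(p)$-jets at $\Lambda$, the end models on fixed punctured neighbourhoods of $E$, and the unitary monodromy. Along the recursion I would force two quantities to diverge, using the freedom built into the approximation: the intrinsic distance in the induced metric from $M_1$ to $\di M_j$, which makes the limiting metric complete (the $E$-ends being complete regular ends by construction); and the distance $\dist\bigl(o,\psi(\di M_j)\bigr)\to\infty$ from a basepoint $o\in\h^3$. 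Because the collars $M_{j+1}\setminus M_j$ are relatively compact in $M\setminus E$, the latter control makes every connected component of the preimage of a compact set compact, so the limit map is almost proper. By the Cauchy estimates supplied by the approximation the curves converge to a null curve $F$ on $M\setminus E$ with complete induced metric and $SU(2)$-monodromy, whence $\psi=FF^*$ is a well defined complete conformal $\CMC$ immersion $M\setminus E\to\h^3$ satisfying \ref{prop1}--\ref{prop3} and almost proper.

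The principal obstacle is to carry all of the constraints through every step of the recursion simultaneously: to preserve the exact regular-end models at $E$ and the jets at $\Lambda$ while at the same time enforcing completeness and almost properness, all the while keeping the monodromy inside $SU(2)$ so that the null curve descends to $\h^3$. The delicate bookkeeping is the period/monodromy problem, since each extension must be performed within the family of null curves having the prescribed periods; this, together with the jet conditions, is exactly what the approximation theorem for null curves in $\c^2\times\c^*$ is designed to control, and the oddness of $m$ is what makes the prescribed ends compatible with unitary monodromy in the first place, so that no obstruction to descent arises when the ends are imposed.
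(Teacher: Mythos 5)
Your overall skeleton (Bryant correspondence, null curves in $\c^2\times\c^*$, Runge approximation with jet interpolation) matches the paper's strategy, but two of your load-bearing claims are wrong or missing the actual mechanism. First, the $SU(2)$-monodromy bookkeeping you emphasize is a red herring, and with it your explanation of the oddness hypothesis. The paper never imposes or tracks unitary monodromy: it constructs a genuinely single-valued null curve $\wt X\colon M\to\c^2\times\c^*\cong SL_2(\C)\setminus\{z_{11}=0\}$ (via the biholomorphism $\Tcal$ of \eqref{eq::defT}) and composes with the globally defined projection $\pi_H(A)=A\overline A^T$, so descent to $\h^3$ is automatic; the only period problem is exactness of $f\theta$ in $\C^3$, solved by period-dominating sprays. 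The oddness of $m(p)$ is \emph{not} ``the parity for which the model end has monodromy in $SU(2)$''; it is forced by the null equation $(X_1'-\igot X_2')(X_1'+\igot X_2')=-(X_3')^2$, which makes $\ord_p(X_1'-\igot X_2')$ even whenever the end is realized as a zero of $X_3$, while the multiplicity equals $\ord_p(X_1'-\igot X_2')+1$ (Lemma \ref{lemma::m}). It is a limitation of this construction method, not an obstruction to closing up in $\h^3$: the authors explicitly expect the theorem to hold for even $m$ as well, which your monodromy-parity explanation would rule out.

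Second, your scheme for securing conditions \ref{prop2}--\ref{prop3} has a genuine gap. You propose to keep the end models ``(essentially) fixed on fixed punctured neighbourhoods of $E$'' through the recursion; holding a holomorphic map exactly fixed on an open set forces it to be fixed everywhere by the identity principle, and mere $\Cscr^0$-approximation does not preserve multiplicity, regularity, or finite total curvature of an end. The paper's solution is precisely what your proposal lacks: work on all of $M$ rather than $M\setminus E$, encode the ends as the zeros of the third component via the explicit models \eqref{eq::xm1}--\eqref{eq::xm}, and prove approximation theorems with the extra clause $\wt X_3^{-1}(0)=X_3^{-1}(0)$ (condition \ref{rcondc} of Theorem \ref{th::Runge}, the paper's main technical novelty, established through Lemma \ref{lemma::ind}) together with jet interpolation to order $m(p)+1$ at $E$, so that $\ord_p(\wt X_1'-\igot \wt X_2')=m(p)-1$ survives the approximation. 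Even then, (ii)--(iii) are not immediate from the models: one must run the Umehara--Yamada asymptotic analysis (indicial equations, Hopf differential, the non-totally-umbilic check) to show that any null curve of this form projects to a regular, finite-total-curvature end of multiplicity $\ord_p(\wt X_1'-\igot\wt X_2')+1$, smooth exactly when that multiplicity is $1$; this is the content of Lemmas \ref{lemma::m} and \ref{lemma::smooth}, which your proposal asserts rather than proves.
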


Theorem \ref{th:intro-Bryant} is proved in Section \ref{sec:bryant}. 
We do not know whether the statement remains to hold if the map $m$ is allowed to also take even values or if the immersion $\psi$ is asked to have embedded non-smooth ends at some points in $m^{-1}(1)$. We expect it does, although our construction method does not seem to work in these cases (see Lemmas \ref{lemma::m} and \ref{lemma::smooth}). Recall that a continuous map $f:X\to Y$ between topological spaces is said to be {\em almost proper} if  for every compact set $K\subset Y$ the connected components of $f^{-1}(K)$ are all compact. Every almost properly immersed submanifold of an open complete Riemannian manifold is complete. In particular, almost properly immersed surfaces in $\h^3$ are complete. 
 
Surfaces of constant mean curvature $1$ in $\h^3$ are often called {\em Bryant surfaces} after Robert Bryant for his seminal paper \cite{Bryant1987Asterisque}; we shall adopt this terminology. Bryant showed that these surfaces admit a conformal representation in terms of holomorphic data on an open Riemann surface (see Section \ref{sec:SL2C}) that is analogous to the Weierstrass representation formula for minimal surfaces in the Euclidean space $\r^3$. This allows to study Bryant surfaces by using complex analytic methods. Moreover, similar to minimal surfaces in $\r^3$ which are characterized by the holomorphicity of their Gauss map, Bryant surfaces are characterized by their {\em hyperbolic Gauss map} being holomorphic \cite{Bryant1987Asterisque}. Complete Bryant ends of finite total curvature are conformally equivalent to a punctured disc, but, unlike the case of minimal surfaces, the hyperbolic Gauss map of such an end need not extend to the puncture. Complete Bryant ends at which the hyperbolic Gauss map extends meromorphically are called {\em regular} \cite[Def.\ 1.4]{UmeharaYamada1993AM}. Since the hyperbolic Gauss map is fundamental to the geometry of a Bryant surface, the study of regular ends, starting with the landmark paper by Umehara and Yamada \cite{UmeharaYamada1993AM}, is of great importance in the theory. The {\em multiplicity} $m \in \N$ of a Bryant regular end (see Section \ref{sec:bryant} or \cite[\textsection5]{UmeharaYamada1993AM} for the precise notion) measures how far is the end to be embedded; such an end is embedded if and only if it is of multiplicity $1$ \cite[Theorem 5.2]{UmeharaYamada1993AM}. Bryant regular ends are tangent to the ideal boundary $\partial_{\infty}\h^3$ of $\h^3$, while an immersed Bryant annular end is said to be {\em smooth} if it is conformally equivalent to a punctured disc and the immersion extends smoothly to the puncture through $\partial_{\infty}\h^3$ (see Bohle and Peters \cite[p.\ 590]{BohlePeters2009}). 
Smooth Bryant ends are properly embedded (hence of multiplicity one), regular, and of finite total curvature. More generally, every properly embedded Bryant annular end is regular and of finite total curvature by the fundamental result of Collin, Hauswirth, and Rosenberg \cite[Theorem 10]{CollinHauswirthRosenberg2001AM}. 

The set $E$ in Theorem \ref{th:intro-Bryant} is allowed to be empty; conditions \ref{prop2} and \ref{prop3} are vacuous in this particular case. We thus obtain the following immediate corollary which proves that every open Riemann surface is the complex structure of a complete Bryant surface (see Corollary \ref{cor::intbryant} for a more precise statement).
%
%
\begin{corollary}\label{co:intro-dense}
Let $M$ be an open Riemann surface. For any divergent sequence $a_1,a_2,\ldots$ in $M$ without repetition and any sequence $c_1,c_2,\ldots$ in $\h^3$ there is an almost proper (hence complete) conformal $\CMC$ immersion $\psi: M\to\h^3$ with $\psi(a_j)=c_j$ for $j=1,2,\ldots$ 
In particular, there is an almost proper (hence complete) conformal  $\CMC$ immersion $\psi: M\to\h^3$ with everywhere dense image: $\overline{\psi(M)}=\h^3$.
\end{corollary}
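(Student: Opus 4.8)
The plan is to obtain Corollary \ref{co:intro-dense} as a direct application of Theorem \ref{th:intro-Bryant} in the special case $E=\emptyset$, in which conditions \ref{prop2} and \ref{prop3} are vacuous. Since the sequence $a_1,a_2,\ldots$ is divergent and without repetition, the set $\Lambda:=\{a_j:j\in\n\}$ is a closed discrete subset of $M$; together with $E=\emptyset$ it forms an admissible pair for the theorem. The only datum that remains to be supplied is a conformal $\CMC$ immersion $\varphi:U\to\h^3$ on a neighborhood $U\subset M$ of $\Lambda$ with $\varphi(a_j)=c_j$ for every $j$. Once such a $\varphi$ is in hand, Theorem \ref{th:intro-Bryant} applied with, say, the constant map $k\equiv 1$ furnishes an almost proper conformal $\CMC$ immersion $\psi:M\to\h^3$ having contact of order $1$ with $\varphi$ at each $a_j$; in particular $\psi(a_j)=\varphi(a_j)=c_j$, and $\psi$ is complete because it is almost proper.

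To construct $\varphi$ it suffices to work locally and independently at each node. First I would fix once and for all a single conformal $\CMC$ immersion $\varphi_0:\d\to\h^3$ of the unit disc---for instance a conformally parametrized cap of a horosphere, which is intrinsically flat and has mean curvature identically $1$---and record its value $\varphi_0(0)=:q_0$. Because the isometry group of $\h^3$ acts transitively and its elements are conformal and preserve the mean curvature, for each $j$ there is an isometry $g_j$ of $\h^3$ with $g_j(q_0)=c_j$, so that $g_j\circ\varphi_0:\d\to\h^3$ is again a conformal $\CMC$ immersion sending $0$ to $c_j$. Choosing pairwise disjoint holomorphic coordinate discs $U_j\subset M$ centered at $a_j$ (possible since $\Lambda$ is discrete) and transporting $g_j\circ\varphi_0$ through these charts yields a conformal $\CMC$ immersion $\varphi$ on the open neighborhood $U:=\bigsqcup_j U_j$ of $\Lambda$ with $\varphi(a_j)=c_j$, as required. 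This elementary local construction is the only input beyond Theorem \ref{th:intro-Bryant}; no genuine obstacle arises here, the entire analytic difficulty being concentrated in the theorem itself.

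Finally, the density statement follows by a suitable choice of data. I would take $c_1,c_2,\ldots$ to be an enumeration of a countable dense subset of $\h^3$ and $a_1,a_2,\ldots$ to be any divergent sequence without repetition in $M$ (which exists because $M$ is open). The resulting immersion $\psi:M\to\h^3$ then satisfies $\psi(M)\supseteq\{c_j:j\in\n\}$, a dense subset of $\h^3$, whence $\overline{\psi(M)}=\h^3$. Thus the whole of the work reduces to verifying that the hypotheses of Theorem \ref{th:intro-Bryant} can be met with $E=\emptyset$ and prescribed values along $\Lambda$, which the horosphere-plus-isometry construction accomplishes.
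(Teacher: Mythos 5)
Your proposal is correct and follows essentially the same route as the paper: the authors also obtain Corollary \ref{co:intro-dense} by applying Theorem \ref{th:intro-Bryant} (equivalently, its precise form, Corollary \ref{cor::intbryant}) with $E=\varnothing$ and $\Lambda=\{a_j\}_{j\in\n}$, the local data being conformal $\CMC$ discs through the prescribed points $c_j$. Your explicit horosphere-plus-isometry construction of the local immersion $\varphi$ with $\varphi(a_j)=c_j$ is exactly the (left implicit) input the paper relies on, and your density argument via a countable dense set of target values coincides with theirs.
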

Almost proper ones are in some sense the best class of immersions $M\to \h^3$ that can hit an arbitrary countable subset of $\h^3$.
It remains an open question whether every open Riemann surface $M$ admits a proper conformal $\CMC$ immersion $M\to\h^3$ \cite[Problem 1]{AlarconForstneric2015MA}. Every such $M$ is known to properly conformally immerse in $\R^3$ as a minimal surface \cite{AlarconLopez2012JDG}; see \cite[\textsection 3.10]{AlarconForstnericLopez2021Book} for further discussion.
As far as we are aware, Corollary \ref{co:intro-dense} provides the first known examples of a complete densely immersed Bryant surface in $\h^3$. See \cite{AlarconCastroInfantes2018GT} and the references therein for similar existence results for complete dense minimal surfaces in Euclidean space.

It follows from the Lawson correspondence \cite{Lawson1970AM} that every simply connected $\CMC$ surface in $\h^3$ is isometric to a minimal surface in $\r^3$, and vice versa. The intrinsic local theory of Bryant surfaces is thus equivalent to that of minimal surfaces in $\r^3$. If this was originally the main reason to study $\CMC$ surfaces in $\h^3$ ---but see Donaldson \cite[\textsection3.3]{Donaldson1992JGP} and Hertrich-Jeromin, Musso, and Nicolodi \cite{Hertrich-JerominMussoNicolodi2001} for different motivations---, the topic became an active focus of interest in its own right which counts with a sizable literature.
On the other hand, the global theory of Bryant surfaces differs substantially from that of minimal surfaces in $\r^3$; see Rossman \cite{Rossman01}, Rosenberg \cite{Rosenberg2002}, and the references therein for information in this direction. 
Furthermore, the period problem for Bryant surfaces cannot be solved by classical potential theory, and hence constructing Bryant surfaces with nontrivial topology via conformal representation is a more difficult task than for minimal surfaces (see G\'alvez and Mira \cite[p.\ 458]{GalvezMira2005} and Pirola \cite{Pirola2007AJM} for a discussion of this fact). 
This has not prevented the development of a number of different constructions of Bryant surfaces (see \cite{UmeharaYamada1992APO,UmeharaYamada1993AM,UmeharaYamada1996Ann,RUY97,HauswirthRoitmanRosenberg02,Bobenk03, PaPi04,RUY04,GalvezMira2005,TenenblatWang2009,LRTT12} for a few instances); nevertheless, only few open Riemann surfaces were known to carry a complete conformal CMC-1 immersion into $\h^3$ until now.

A way to overcome the aforementioned difficulty is to make use of a projection $\c^2\times\c^*\to\h^3$, discovered by Mart\'in, Umehara, and Yamada in \cite{MartinUmeharaYamada2009CVPDE}, which takes holomorphic null curves in $\c^2\times\c^*$ (see the next paragraph) into conformal $\CMC$ immersions into $\h^3$; see Section \ref{sec:SL2C} for the details. (As it is customary, we denote $\c^*=\c\setminus\{0\}$.) This strategy has been exploited by Alarc\'on,  Forstneri\v c, and L\'opez \cite{AlarconLopez2013MA,AlarconForstneric2015MA} in order to produce Bryant surfaces with arbitrary topological type or conformally equivalent to any bordered Riemann surface and enjoying various global properties. In this paper we delve into this direction and show, in particular, that this approach can be adapted to construct complete Bryant surfaces with arbitrary complex structure and the additional conditions in Theorem \ref{th:intro-Bryant}.

We now present our results on holomorphic null curves in $\c^2\times\c^*$ leading to the proof of Theorem \ref{th:intro-Bryant}. Let $M$ be an open Riemann surface. A {\em holomorphic null curve} $M\to\c^n$ $(n\ge 3)$ is a holomorphic immersion $X=(X_1,\ldots,X_n): M\to\c^n$ which is directed by the null quadric   
\begin{equation}\label{def:nullquadric}
	\mathbf{A} = \big\{  z = (z_1,\ldots,z_n) \in \C^n:\; z_1^2 + \ldots + z_n^2=0 \big\},
\end{equation}
in the sense that the derivative $X'=(X_1',\ldots,X_n')$ with respect to any local holomorphic coordinate on $M$ has range in ${\bf A}_*={\bf A}\setminus\{0\}$; equivalently, the differential $dX=(dX_1,\ldots,dX_n)$ vanishes nowhere on $M$ and satisfies $\sum_{j=1}^n (dX_j)^2=0$ everywhere on $M$.
The real and imaginary part of a holomorphic null curve $M\to\c^n$ are conformal minimal immersions $M\to\r^n$. Conversely, every conformal minimal immersion $M\to\r^n$ is locally on each simply connected domain of $M$ the real part of a holomorphic null immersion into $\c^n$. This connection has strongly influenced the theory of minimal surfaces, supplying the field with powerful tools coming from complex analysis and the theory of Riemann surfaces, as well as from modern Oka theory (see the monographs by Forstneri\v c \cite{Forstneric2017E,Forstneric2023IM} for background on this subject). We refer to \cite{AlarconForstnericLopez2021Book} for a recent monograph on minimal surfaces in $\r^n$ and holomorphic null curves in $\c^n$  from a complex analytic viewpoint.

The second main result of independent interest in this paper is the following Runge type approximation theorem with Weierstrass-Florack jet interpolation for complete holomorphic null curves in $\c^2\times\c^*$ (see the more precise Theorem \ref{th::Runge}). 
%
%
\begin{theorem}\label{th:intro-C2xC*}
Let $M$ be an open Riemann surface, $K\subset M$ be a compact subset such that $M\setminus K$ has no relatively compact connected components, and $\Lambda\subset M$ be a closed discrete subset. If $X: U\to \c^2\times\c^*$ is a holomorphic null curve on an open neighborhood $U\subset M$ of $K\cup\Lambda$, then for any number $\varepsilon>0$ and any map $k:\Lambda\to\n$ there is a complete holomorphic null curve $\wt X: M\to\c^2\times\c^*$
such that $|\wt X-X|<\varepsilon$ everywhere on $K$, and $\wt X-X$ vanishes to order $k(p)$ at $p$ for every point $p\in\Lambda$. Furthermore, $\wt X$ can be chosen injective provided that $X|_\Lambda$ is injective.

In fact, there is a holomorphic null curve $\wt X=(\wt X_1,\wt X_2,\wt X_3): M\to\c^2\times\c^*$ satisfying the above properties such that $(\wt X_1,\wt X_2): M\to\c^2$ is an almost proper map. 
\end{theorem}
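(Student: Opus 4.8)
The plan is to prove Theorem \ref{th:intro-C2xC*} by the standard two-step recursive scheme from modern complex-analytic minimal surface theory: a sequence of Runge-type approximation/interpolation steps that simultaneously (a) push a suitable ``diameter'' or Euclidean-norm quantity of the first two coordinates $(\wt X_1,\wt X_2)$ to infinity so as to force completeness and almost properness, and (b) preserve the jet interpolation data at $\Lambda$ and the approximation on $K$. Since we are working with null curves directed by the quadric $\mathbf{A}$ but now constrained to land in $\c^2\times\c^*$ (i.e.\ the third coordinate $\wt X_3$ must avoid $0$), the crucial analytic input is a Mergelyan/Runge approximation theorem with jet interpolation for null curves valued in this punctured target; I would invoke (or first establish) the more precise Theorem \ref{th::Runge} referenced in the statement, which I treat as the workhorse for each individual step.

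First I would fix a normal exhaustion $K=K_0\subset K_1\subset K_2\subset\cdots$ of $M$ by smoothly bounded compact Runge domains, each $M\setminus K_j$ having no relatively compact components, arranged so that $\Lambda$ meets the boundary annuli $K_{j}\setminus K_{j-1}$ in a controlled way (at most finitely many points of $\Lambda$ per annulus, since $\Lambda$ is closed and discrete). Starting from $X$, I would construct inductively a sequence of holomorphic null curves $X_j:U_j\to\c^2\times\c^*$ on neighborhoods of $K_j\cup\Lambda$ so that consecutive curves are uniformly close on $K_{j-1}$ (with geometrically summable errors $\varepsilon_j$, e.g.\ $\varepsilon_j=\varepsilon/2^{j}$), so that each $X_j-X$ vanishes to the prescribed order $k(p)$ at each $p\in\Lambda\cap K_j$, and so that the intrinsic distance from $\di K_{j-1}$ to $\di K_j$ in the metric induced by the first two coordinates is at least $1$. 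The completeness of the limit $\wt X=\lim_j X_j$ then follows in the usual way: any divergent path in $M$ eventually crosses infinitely many of these annuli, accumulating infinite length. For the almost-properness of $(\wt X_1,\wt X_2)$ I would additionally arrange at each step that $(\wt X_1,\wt X_2)$ maps $\di K_j$ outside the Euclidean ball of radius $j$ in $\c^2$, which guarantees that the preimage of any compact set in $\c^2$ has only compact components. The injectivity clause, under the hypothesis that $X|_\Lambda$ is injective, is obtained by a general-position refinement in each step, keeping the self-intersection set from accumulating while respecting the interpolation constraints.

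The main obstacle, and the place where the $\c^*$ factor genuinely complicates matters compared to the $\c^n$ case, is carrying out each approximation/interpolation step while keeping the third coordinate nonvanishing and keeping the curve null. The nullity condition couples all three coordinate differentials through $\sum(d\wt X_i)^2=0$, so one cannot modify $\wt X_3$ freely; instead I would work with a spray of null curves (a period-dominating dominating family built from the tangent directions of $\mathbf{A}_*$) and control $\wt X_3$ through its logarithmic derivative, exploiting that $\c^*$ is an Oka manifold and that $\mathbf{A}_*$ fibers conveniently over the relevant coordinate. Concretely, the delicate point is that the completeness-forcing and almost-properness-forcing deformations act on $(\wt X_1,\wt X_2)$, yet every such deformation perturbs $\wt X_3$; one must verify that these perturbations can be made small enough (and arranged through the spray) that $\wt X_3$ stays in $\c^*$ throughout the induction. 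I expect this to reduce, via the precise Theorem \ref{th::Runge}, to a convex-integration / Oka argument in which the noncritical target $\mathbf{A}_*\cap(\c^2\times\c^*)$ plays the role occupied by $\mathbf{A}_*$ in the Euclidean setting, and the bulk of the technical work lies in checking the relevant geometric conditions (convex hull / dominating directions) are preserved under the constraint $z_3\neq 0$.
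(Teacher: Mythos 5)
Your overall architecture---a recursive Runge/Mergelyan scheme over a Runge exhaustion, period-dominating sprays into $\mathbf{A}_*$, general position for injectivity, and forcing $(\wt X_1,\wt X_2)$ to escape larger and larger balls on the boundaries of the exhaustion to get almost properness---is the same as the paper's (Theorem \ref{th:intro-C2xC*} is deduced from Theorem \ref{th::Runge}, whose proof is exactly such an induction; note that completeness is automatic once $(\wt X_1,\wt X_2)$ is almost proper, so your separate intrinsic-distance-forcing step is redundant). However, at the one point where the target $\c^2\times\c^*$ genuinely differs from $\c^3$---which you correctly single out as the crux---your proposal has a real gap, and the framework you suggest for closing it is wrong. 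The condition $\wt X_3\neq 0$ is a constraint on the \emph{curve}, not on its derivative: the derivative of a null curve into $\c^2\times\c^*$ need only lie in $\mathbf{A}_*$, and conversely a null curve whose derivative avoids $\{z_3=0\}$ can still have $\wt X_3$ vanishing (e.g.\ $X_3(z)=z$ near $z=0$). Hence no choice of directing subvariety such as $\mathbf{A}_*\cap(\c^2\times\c^*)$, and no convex-hull/domination analysis of such a cone, encodes the requirement; the ``noncritical target'' reduction you expect simply does not exist. Likewise, prescribing $\wt X_3$ via its logarithmic derivative creates its own unaddressed period obstruction: one needs $\int_\gamma d\wt X_3/\wt X_3\in 2\pi\igot\,\z$ for every loop $\gamma$ in $M$, a condition your sprays are not designed to handle.

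What actually closes this gap in the paper (Lemma \ref{lemma::ind}) is a divisor-preserving factorization argument. Writing $f=dX/\theta$ and $\eta=f_1+\igot f_2$, nullity gives $f_1-\igot f_2=-f_3^2/\eta$. One first approximates $X_3$ by a function $\wh X_3$ on the larger domain with \emph{identical zero divisor} (Weierstrass theorem plus Runge approximation into the Oka manifold $\c^*$ applied to $X_3/\tau$); then one approximates $\eta$ by $\hat\eta$ with the same divisor, so that $\hat f_1=\frac12\bigl(\hat\eta-\hat f_3^2/\hat\eta\bigr)$ and $\hat f_2=-\frac{\igot}{2}\bigl(\hat\eta+\hat f_3^2/\hat\eta\bigr)$ are holomorphic and $(\hat f_1,\hat f_2,\hat f_3)$ maps into $\mathbf{A}_*$; only after this are the periods of the first two components repaired by a period-dominating spray, and the zero set of the integral $\wt X_3$ is pinned down by combining smallness of the spray parameter, the jet interpolation at $\Lambda$, and Hurwitz's theorem. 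In addition, the properness-forcing step requires a strengthening of \cite[Lemma 4]{AlarconForstneric2015MA} in which the third component is perturbed by less than $\varepsilon$ uniformly on the \emph{whole} bordered surface, i.e.\ $\|\wh X_3-X_3\|_{0,\overline M}<\varepsilon$ as in Lemma \ref{lemma::ap}, not merely on the compact set where the approximation takes place---otherwise the large deformations pushing $(\wt X_1,\wt X_2)$ outward could reintroduce zeros of $\wt X_3$. Your proposal states the requirement (``the perturbations can be made small enough that $\wt X_3$ stays in $\c^*$'') but supplies no mechanism achieving it; that mechanism is precisely the new content of the paper.
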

The analogue of Theorem \ref{th:intro-C2xC*} for holomorphic null curves in $\C^3$ is already known except for the final assertion about the almost properness condition; see  \cite{AlarconLopez2012JDG,AlarconForstneric2014IM,AlarconCastro-Infantes2019APDE}. The main new point is that we are now working not in the whole space $\C^3$ but in a considerably smaller target: $\c^2\times\c^*$. The proof, given in Section \ref{sec:Runge} as application of the main technical lemma which we establish in Section \ref{sec:lemma} (see Lemma \ref{lemma::ind}), broadly follows the approach in the mentioned sources based on the control of periods via the use of period dominating sprays of holomorphic maps into the punctured null quadric ${\bf A}_*={\bf A}\setminus\{0\}$ \eqref{def:nullquadric}, but we proceed with additional precision and introduce a new idea that enables us to prevent the third component function from vanishing.
We refer the reader to Sections \ref{sec:Runge} and \ref{sec:Carleman} for more precise statements including also Mergelyan and Carleman type approximation, as well as for analogous results for holomorphic null curves in $\C^3$ with prescribed zero set of the third component function (see Theorems \ref{th::Runge}  and \ref{th::Carleman}). The latter is precisely what shall allow us to guarantee the additional control on the asymptotic behavior of the conformal $\CMC$ immersion $\psi$ in Theorem \ref{th:intro-Bryant}, conditions \ref{prop2} and \ref{prop3}. A minor modification of the proof gives the analogues of Theorem \ref{th:intro-C2xC*} for holomorphic null curves in $\C^l\times(\C^*)^n$ for arbitrary integers $l\ge 2$ and $n\ge 1$. 

Our results on holomorphic null curves in $\C^3$ in Section \ref{sec:Runge} also have applications to holomorphic null curves in the special linear group $SL_2(\c)$ and to spacelike surfaces of constant mean curvature $1$ with admissible singularities in de Sitter 3-space $\s^3_1$ (called $\CMC$ {\em faces}). In particular, we obtain analogues of Theorem \ref{th:intro-Bryant} for these families of surfaces; we discuss these results in Section \ref{sec:SL2C}.
On the other hand, similar existence, approximation, and interpolation results for almost proper, injectively immersed complex curves in $\c^2$ have recently been obtained by Alarc\'on and Forstneri\v c in \cite{AlarconForstneric2023}, with a method completely different from the one in this paper.

%
%
\section{Semi-global approximation for holomorphic null curves in $\C^2\times\C^*$}\label{sec:lemma}

\noindent We shall denote $\igot = \sqrt{-1}\in\C$ and by $| \cdot |$, $\dist ( \cdot, \cdot)$, and $\length(\cdot)$ the Euclidean norm, distance, and length on $\C^n$ $(n \in \n)$, respectively. 
Given a set $K$ in a complex manifold we denote by $\Oscr(K)$ the space of holomorphic functions on $K$; i.e., extending holomorphically to an open neighborhood of $K$. If $K$ is compact we denote by $\Ascr^r(K)$ the space of all $\Cscr^r$ functions $K\to\C$ which are holomorphic in the interior $\mathring K$ of $K$, and write $\Ascr(K)$ for $\Ascr^0 (K)$. Likewise, we define the spaces $\Oscr(K,N)$ and $\Ascr^r(K,N)$ for maps into an arbitrary complex manifold $N$; nevertheless we shall omit the target from the notation when it is clear from the context.
For a $\Cscr^r$ map $f\colon K\to \C^n$, we denote by $\| f \|_{r,K}$ the $\Cscr^r$ maximum norm of $f$ on $K$.

Given a meromorphic function $h \colon M \to \C$ on an open Riemann surface $M$, and a point $p \in M$, we denote by
\begin{equation}\label{eq:orden}
	\ord_p (h) \in \Z
\end{equation}
the only integer such that $z^{-\ord_p (h)} h$ is holomorphic and nonzero at $p$, where $z$ is any local holomorphic coordinate on $M$ with $z(p)=0$.

Let us recall the type of sets and maps we shall use for the Mergelyan approximation for holomorphic null curves; see \cite[Def.\ 1.12.9 and 3.1.3]{AlarconForstnericLopez2021Book}.
%
%
\begin{definition}\label{def:admissible}
An \emph{admissible set} in a smooth surface $M$ is a compact set of the form $S = K \cup \Gamma \subset M $, where $K$ is a (possibly empty) finite union of pairwise disjoint compact domains with piecewise $\Cscr^1$ boundaries in $M$ and $\Gamma= S \setminus \mathring K =\overline{S\setminus K}$ is a (possibly empty) finite union of pairwise disjoint smooth Jordan arcs and closed Jordan curves meeting $K$ only at their endpoints (if at all) and such that their intersections with the boundary $bK$ of $K$ are transverse.
\end{definition}
\begin{definition}\label{def:gnc}
	 Let $S = K \cup \Gamma$ be an admissible set in a Riemann surface $M$ and $\theta$  be a nowhere vanishing holomorphic $1$-form on a neighborhood of $S$. A \emph{generalized null curve} $S \to \C^n$ $(n \geq 3)$ of class $\Ascr^r (S)$ $(r \in \n)$ is a pair $(X,f \theta$) with $X \in \Ascr^r ( S, \C^n)$ and $f \in \Ascr^{r-1} ( S, \mathbf{A}_*)$ (see \eqref{def:nullquadric}) such that $f \theta = dX $ holds on $K$ (hence $X \colon \mathring K \to \C^n$ is a holomorphic null curve) and for any smooth path $\alpha$ in $M$ parameterizing a connected component of $\Gamma$ we
	 have $ \alpha^* (f \theta) = \alpha^* (d X)= d ( X \circ \alpha)$.
\end{definition}

Recall that a compact subset $K$ of an open Riemann surface $M$ is said to be \emph{Runge} (also {\em holomorphically convex} or {\em $\Oscr(M)$-convex}) if every function in $\Ascr (K)$ may be approximated uniformly on $K$ by functions in $\Oscr(M)$. By the Runge-Mergelyan theorem  \cite[Theorem 5]{FFW18} this is equivalent to $M\setminus K$ having no relatively compact connected components in $M$ (i.e., holes). 
A map $f\colon M \to \C^n$ is {\em flat} if $f(M)$ is contained in an affine complex line in $\C^n$, and {\em nonflat} otherwise.

In this section we establish the following semiglobal approximation result with jet interpolation for holomorphic null curves in $\C^3$ with control on the zero set of the third component function.
%
%
\begin{lemma} \label{lemma::ind}
Let $M$ be an open Riemann surface, $\theta$ be a nowhere vanishing holomorphic $1$-form on $M$, $S\subset M$ be a Runge admissible  subset,
$L\subset M$ be a smoothly bounded compact domain such that $S\subset\mathring L$, and $\Lambda \subset \mathring{S}$ be a finite subset.
If $(X = (X_1,X_2,X_3), f\theta)$ is a generalized null curve  $S\to \C^3 $  of class $\Ascr^r(S)$ $(r \geq 1)$ such that 
\begin{equation}\label{eq:X3Lambda}
	X_3^{-1} (0) \subset \Lambda,
\end{equation} 
then for any numbers $\varepsilon>0$ and $k\in\n$ there is a nonflat holomorphic null curve $\wt{X}=(\wt X_1,\wt X_2,\wt X_3) \colon L \to \C^3 $ satisfying the following conditions: 
	\begin{enumerate}[label= \rm (\alph*)]
		\item \label{conda}  $\| \wt{X} - X \|_{r,S} < \varepsilon$.
		\item \label{condb} $\wt{X} - X$ vanishes to order $k$ at every point of $ \Lambda$.
		\item \label{condc} $ \wt X_3^{-1}(0) = X_3^{-1}(0)\subset\Lambda$.
	\end{enumerate}
\end{lemma}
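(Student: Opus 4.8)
The plan is to realize $\wt X$ through Weierstrass-type data and to encode the third coordinate \emph{multiplicatively}, so that condition \ref{condc} becomes essentially automatic; this is the new device that prevents the third component from acquiring spurious zeros. Since $X_3^{-1}(0)\subset\Lambda$, the logarithmic differential $\xi_0:=dX_3/X_3$ is a meromorphic $1$-form on a neighborhood of $S$, holomorphic on $S\setminus\Lambda$, with a simple pole of residue $\ord_p(X_3)\in\N$ at each $p\in X_3^{-1}(0)$, and $X_3=X_3(q_0)\exp\!\left(\int_{q_0}\xi_0\right)$ there; in particular $\int_\gamma\xi_0\in 2\pi\igot\,\Z$ for every closed curve $\gamma$ in $S$. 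I would therefore look for a null curve whose third coordinate has the form $\wt X_3=c\exp\!\left(\int\wt\xi\right)$, where $\wt\xi$ is a meromorphic $1$-form on $L$ with exactly the same principal parts as $\xi_0$ at the points of $X_3^{-1}(0)$, no other poles, and all of whose periods lie in $2\pi\igot\,\Z$. Any such $\wt X_3$ vanishes precisely at the poles of $\wt\xi$, with the prescribed multiplicities, and nowhere else, which yields \ref{condc} for free.

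The remaining coordinates are recovered from $\wt\xi$, the third differential $\wt\phi_3:=\wt\xi\,\wt X_3$, and a Gauss map $g$ by the classical formulas $\wt\phi_1=\tfrac12(1/g-g)\,\wt\phi_3$, $\wt\phi_2=\tfrac{\igot}{2}(1/g+g)\,\wt\phi_3$, so that the null condition $\wt\phi_1^2+\wt\phi_2^2+\wt\phi_3^2=0$ holds identically. I would extract the pair $(\xi_0,g_0)$ from the given $(X,f\theta)$ on $S$ and, working on $L$, approximate it by data $(\wt\xi,g)$ agreeing with $(\xi_0,g_0)$ to order $k$ at $\Lambda$ and carrying the prescribed principal parts at $X_3^{-1}(0)$. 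For the approximation of the Gauss map into the relevant fibre I would use Mergelyan approximation on the Runge admissible set $S$ together with the Oka property of the punctured null quadric $\mathbf{A}_*$, and I would build the jet interpolation into the correction terms by multiplying them by a fixed function vanishing to order $k$ at $\Lambda$.

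The crux is a period problem of \emph{mixed} nature. The functions $\wt X_1,\wt X_2$ are single valued on $L$ only if $\int_\gamma\wt\phi_1=\int_\gamma\wt\phi_2=0$ for $\gamma$ generating $H_1(L)$, which are continuous (complex) conditions, whereas $\wt X_3=c\exp\!\left(\int\wt\xi\right)$ is single valued only if $\int_\gamma\wt\xi\in 2\pi\igot\,\Z$, a discrete condition. I would dispose of the discrete condition first, fixing the topological periods of $\wt\xi$ to the appropriate elements of $2\pi\igot\,\Z$ (possible because $\int_\gamma\xi_0\in 2\pi\igot\,\Z$ already on $S$, and one has full freedom on any newly added handle), and then kill the two continuous period vectors by the standard method of a period-dominating holomorphic spray $g_w$ of Gauss maps with $g_0=g$, making the differential of the period map in $w$ surjective and solving for $w$ near $0$ by an implicit-function argument. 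Keeping the corrections small on $S$ gives \ref{conda}, the high-order vanishing at $\Lambda$ gives \ref{condb}, and a final application of Hurwitz's theorem on $S$ ensures that no zeros of $\wt X_3$ arise on $S$ beyond those pinned down at $\Lambda$, completing \ref{condc}.

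I would carry out the passage from $S$ to $L$ by the customary noncritical/critical induction, so that each elementary step is either a thin collar (no change of topology) or a single handle attachment, keeping the period bookkeeping finite dimensional and allowing me to place each new period of $\wt\xi$ into $2\pi\igot\,\Z$. The main obstacle I anticipate is the compatibility of the two kinds of period conditions with the immersion and nonflatness requirements: at the higher-multiplicity points of $X_3^{-1}(0)$ the form $\wt\phi_3=\wt\xi\,\wt X_3$ is forced to vanish, so one must verify that $g$ can be given compensating zeros or poles there to keep $\wt\phi=(\wt\phi_1,\wt\phi_2,\wt\phi_3)$ nowhere vanishing, and that the period-dominating spray can be chosen without disturbing the fixed principal parts, the integer periods of $\wt\xi$, or the $\Cscr^r(S)$-approximation. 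Nonflatness is then arranged by a final small generic perturbation within the spray.
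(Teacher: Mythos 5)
Your overall architecture---encode the third component multiplicatively so that its zero set is pinned structurally, recover the first two components from Gauss-map data, and kill the remaining periods with a period-dominating spray---is close in spirit to the paper's proof, and several of your ingredients appear there in equivalent guises (your form $\wt\xi$ with periods in $2\pi\igot\,\Z$ is just the logarithmic differential of a global function with prescribed divisor, which is how the paper builds its $\wh X_3$ via a Weierstrass factor $\tau$ and a $\C^*$-valued Oka approximation $\sigma$). However, there is a concrete gap in your treatment of the \emph{immersion} requirement. For $\wt\phi=(\wt\phi_1,\wt\phi_2,\wt\phi_3)$ to be holomorphic and nowhere vanishing, the Gauss map $g$ must have a zero or a pole of \emph{exactly} the order of vanishing of $\wt\phi_3$ at \emph{every} zero of $\wt\phi_3=\wt\xi\,\wt X_3$. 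These zeros are not only the higher-multiplicity points of $X_3^{-1}(0)$ (the only ones you address): they include all zeros of $\wt\xi$, that is, the branch points of $\wt X_3$. Those are not prescribed in advance; under approximation the branch points of $X_3$ on $S$ move (they are only Hurwitz-close to the original ones), and new ones will in general appear on $L\setminus S$. Since the divisor of $g$ must coincide, point by point and with matching orders, with $\pm$ the divisor of $\wt\phi_3$, a Gauss map produced by ``Mergelyan approximation of $g_0$ plus the Oka property'' is in general incompatible with the $\wt\xi$ you fixed beforehand, and then the triple $(\wt\phi_1,\wt\phi_2,\wt\phi_3)$ is not even holomorphic, let alone an immersion. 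Your plan contains no mechanism synchronizing these two divisors.

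The paper resolves exactly this point with two devices absent from your proposal. First, its approximating function $\wh X_3$ agrees with $X_3$ to high order not only on $\Lambda$ but also on the zero set $A=(dX_3/\theta)^{-1}(0)$ of the derivative, so that by Hurwitz the zero divisor of $d\wh X_3/\theta$ on $S$ equals that of $dX_3/\theta$: the branch points on $S$ do not move. Second, and more decisively, instead of approximating the Gauss map itself it approximates the product $\eta=f_1+\igot f_2$ (which equals $-g_0\,dX_3/\theta$ in your notation) by a function $\hat\eta$ on $L$ with the \emph{same divisor}, in particular zero-free on $L\setminus S$, and then sets $\hat f_1=\frac{1}{2}\left(\hat\eta-\hat f_3^2/\hat\eta\right)$, $\hat f_2=-\frac{\igot}{2}\left(\hat\eta+\hat f_3^2/\hat\eta\right)$. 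In this parametrization the compatibility at unforeseen branch points is automatic: wherever $\hat f_3$ vanishes on $L\setminus S$ one has $\hat f_1+\igot\hat f_2=\hat\eta\neq 0$, i.e.\ the Gauss map acquires a pole of exactly the right order by itself. If you replace your direct construction of $g$ by this construction of the product $\eta$ (or build $\wt\xi$ first and only afterwards manufacture $g$ with the forced divisor by a Weierstrass-plus-Oka argument), your proof can be completed. Note finally that the paper also sidesteps your two bookkeeping burdens: the discrete period problem never arises because $\wh X_3$ is built as a global function rather than by integrating a form, and no handle-by-handle induction is needed because the general case is reduced---via the known Mergelyan theorem for null curves followed by cutting arcs from the unwanted zeros of the third component out to $bL$---to the case where $S$ is a deformation retract of $L$.
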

\begin{proof}
Up to passing to a larger $L$ if necessary, we assume that $L\subset M$ is connected and Runge. We also assume without loss of generality that 
\begin{equation}\label{eq:k>}
	k>\max\{\ord_p(X_3): p\in X_3^{-1}(0)\};
\end{equation}
see \eqref{eq:orden}.
We shall first prove the following special case of the lemma.
%
%
\begin{claim}\label{cl:lemma}
The conclusion of the lemma holds if $S$ is a connected smoothly bounded compact domain which is a strong deformation retract of $L$ and $X\colon S\to \C^3$ is a nonflat holomorphic null curve.
\end{claim}
\begin{proof}	
In the first step of the proof we shall follow the construction in \cite[Sec.\ 3.2]{AlarconForstnericLopez2021Book} in order to embed the map $f= d X/\theta\in\Oscr(S,\mathbf{A}_*)$ as the core of a suitable period dominating spray of maps in $\Oscr(S,\mathbf{A}_*)$.
	For this, we fix a point $p_0\in\mathring S\setminus \Lambda\neq\varnothing$ and a finite family $ \{ C_j \}_{j = 1}^n$ of smooth Jordan arcs and curves in $\mathring S$ satisfying the following properties (see \cite{FarkasKra1992} or \cite[Lemma 1.12.10]{AlarconForstnericLopez2021Book}):
\begin{enumerate}[label=$\bullet$]
	\item $C_i\cap C_j=\{p_0\}$ for any $i\neq j\in\{1,\ldots,n\}$.
	\item $\{ C_1, \ldots, C_{\mu} \}$, where $\mu\in\{0,\ldots,n\}$ is the cardinal of $\Lambda$, are smooth Jordan arcs with the initial point $p_0$ and the final point in $\Lambda$.
	\item  $\{ C_{\mu + 1}, \ldots, C_n\}$ are smooth Jordan curves which together span the first homology group $\Hcal_1 ( S, \z)=\Hcal_1 ( L, \z)\cong \Z^{n-\mu}$.
	\item The union $\Ccal = \bigcup_{j=1}^n C_j$ is a Runge compact set in $L$, and $\bigcup_{j=\mu+1}^n C_j\subset \Ccal$ is a deformation retract of $S$, hence of $L$.
\end{enumerate}
The period map $\Pscr= ( \Pscr_1,\ldots, \Pscr_n) \colon \Cscr( \Ccal , \C^3 ) \to (\C^3)^n = \c^{3n}$ associated to the family $\{C_j \}_{j=1}^n$ is defined by the expression
\[
	\Cscr( \Ccal , \C^3 )\ni h\longmapsto \Pscr_j (h) = \int_{C_j} h \theta \in \C^3,\quad j=1,\ldots,n.
\]
The holomorphic map $f \colon S \to \mathbf{A}_*\subset \C^3$ is nonflat by nonflatness of $X$, hence $f|_{C_j} \colon C_j \to \mathbf{A}_*$ is nonflat too for every $j =1,\ldots , n$. This and the fact that $\Ccal$ is a Runge compact subset of $M$ allow to find holomorphic vector fields $V_{j,l}$ on $\C^3$ tangent to $\mathbf{A}$ with flows  $\phi^{j,l}_{\zeta}$ ($\zeta \in \c$), holomorphic functions $h_{j,l} \colon L \to \C$, $(j,l)\in\{1,\ldots , n\}\times\{1,2,3\}$, a holomorphic function $g \colon L \to \C$ vanishing to order $k$ everywhere on $\Lambda \subset \mathring S$, and an open neighborhood $U$ of $0 \in \C^{3n}$ such that the holomorphic map $\Phi_f \colon S \times U \to \mathbf{A}_*$ given by 
\begin{equation}\label{eq::spraydef}
	\Phi_f (p, t) = \phi^{1,1}_{g h_{1,1} (p) t_{1,1}} \circ \ldots  \circ \phi^{n,3}_{g h_{n,3} (p) t_{n,3}} ( f(p) )  \in \mathbf{A}_*, \quad t=(t_{j,l})\in U \subset (\C^3)^n,
\end{equation}
is a $\Pscr$-dominating spray of maps with the core $f$, meaning that the following properties are satisfied (see \cite[Lemma 3.2.1]{AlarconForstnericLopez2021Book}):
\begin{enumerate}[label=(\Roman*)]
	
	\item \label{coref} $\Phi_f ( \cdot, 0) = f$.
	\item \label{intf}$f -  \Phi_{ f}(\cdot, t) $ vanishes to order $k$ everywhere on $\Lambda$ for every $t \in U$.
	\item \label{eq::domination} The derivative at $t=0$ 
	\[
	\frac{\partial}{\partial t} \bigg|_{t= 0} \Pscr(  \Phi_{f} ( \cdot, t) ) \colon (\C^3)^n \to ( \C^3)^n \;  \text{ is an isomorphism.}
	\]	
	\newcounter{Mayus4}\setcounter{Mayus4}{\value{enumi}}
\end{enumerate}

In the second step of the proof we shall replace $f$ in \eqref{eq::spraydef} by a suitable holomorphic map $\hat f=(\hat f_1,\hat f_2, \hat f_3)\in\Oscr(L,\mathbf{A}_*)$; in particular, we shall ensure that $\hat f_3\theta = d \wh X_3$ for a holomorphic function $\wh X_3:L\to\C$ which is close to $X_3$ on $S$ and whose zero set on $L$ equals the one of $X_3$ on $S$.  This is the key ingredient to ensure condition \ref{condc} in the statement of the lemma.
For this, call $f=(f_1,f_2,f_3)$ and $A=f_3^{-1}(0)\subset S$; note that $f_3$ is not identically zero since $X$ is nonflat, hence $A$ is finite. Up to passing to larger $S$ and $k$ if necessary, we assume that $A\subset \mathring S$ and 
\begin{equation}\label{eq:k>>}
	k>\max\{\ord_p (f_3): p\in A=f_3^{-1}(0)\};
\end{equation}	
cf.\ \eqref{eq:k>}.
Fix $\delta>0$ to be specified later. Since $\c^*$ is an Oka manifold (see
\cite[Corollary 1.13.9]{AlarconForstnericLopez2021Book}),
	the Runge approximation theorem with jet interpolation  for maps into Oka manifolds (see \cite[Theorem 1.13.3]{AlarconForstnericLopez2021Book} or \cite[Theorem 5.4.4]{Forstneric2017E}) furnishes us with a holomorphic function $\wh X_3 \colon L \to \c$ such that:
	\begin{enumerate}[label= \rm (\roman*)]
		\item \label{whX3a} $\| \wh X_3  - X_3 \|_{r,S} <  \delta $.
		\item \label{whX3i} $\wh X_3- X_3$ vanishes to order $k$ everywhere on $\Lambda\cup A \cup\{p_0\}$. 
		\item \label{whX3z} $\wh X_3^{-1} (0) = X_3^{-1}(0)\subset \Lambda\subset \mathring S$.
		\newcounter{Mayus3}\setcounter{Mayus3}{\value{enumi}}
	\end{enumerate}
Indeed, let $\tau\colon L\to\C$ be a holomorphic function whose zero divisor on $L$ equals that of $X_3$ on $S$; hence $\tau^{-1}(0)=X_3^{-1}(0)$ and $X_3/\tau$ is holomorphic and nowhere vanishing on $S$. Such a $\tau$ exists by the classical Weierstrass theorem on open Riemann surfaces \cite[Theorem 1.12.13]{AlarconForstnericLopez2021Book}. By the aforementioned \cite[Theorem 1.13.3]{AlarconForstnericLopez2021Book}, we may approximate $X_3/\tau\colon S\to\C^*$ uniformly on $S$ by a holomorphic function $\sigma\colon L\to\C^*$ which agrees with $X_3/\tau$ to order $k$ everywhere on $\Lambda\cup A \cup \{p_0\}$. The function $\wh X_3=\sigma\tau\colon L\to\C$ then satisfies the required conditions if the approximation of $X_3/\tau$ by $\sigma$ is close enough.
	
Consider the holomorphic function
\begin{equation}\label{eq:f3}
 	\hat  f_3 := d \wh X_3/\theta \in\Oscr(L).
\end{equation}
Properties \ref{whX3a} and \ref{whX3i} ensure that:
	\begin{enumerate}[label= \rm (\roman*)]
		\setcounter{enumi}{\value{Mayus3}}
		\item \label{whf3a} $\| \hat f_3  - f_3 \|_{r-1,S} < \delta$.
		\item \label{whf3i} $\hat f_3-f_3$ vanishes to order $k-1$ everywhere on $\Lambda\cup A $. 
\setcounter{Mayus3}{\value{enumi}}
\end{enumerate}
	Assuming that $\delta>0$ is chosen sufficiently small, \ref{whf3a}, \ref{whf3i}, \eqref{eq:k>>}, and Hurwitz theorem (see \cite{Hurwitz1985} or \cite[VII.2.5, p.148]{Conway1973}) guarantee that 
	\begin{enumerate}[label= \rm (\roman*)]
		\setcounter{enumi}{\value{Mayus3}}
		\item \label{whf3z} the zero divisor of $\hat f_3$ on $S$ equals that of $f_3$. In particular, we have that $\hat f_3^{-1}(0)\cap S=f_3^{-1}(0)=A \subset\mathring S$. 
\setcounter{Mayus3}{\value{enumi}}
\end{enumerate}		
Set $\eta := f_1 + \igot f_2 \in \Oscr(S)$. Since $f$ has range in $\mathbf{A}_*$, we have that
	\begin{equation}\label{eq:f1f2}
	\frac{f_3^2}{\eta}=-f_1+\igot f_2\in\Oscr(S),\quad 
	f_1 =   \frac{1}{2} \left( \eta - \frac{f_3^2}{\eta }\right),
	\quad\text{and}\quad
	 f_2 = -\frac{\igot}{2} \left( \eta + \frac{f_3^2}{\eta } \right).
\end{equation}
Moreover, $\eta$ and $f_3^2/\eta$ have no common zeros.
	As above, the Runge theorem with jet interpolation in \cite[Theorem 1.13.3]{AlarconForstnericLopez2021Book} provides a function $\hat \eta\in\Oscr(L)$ such that:
	\begin{enumerate}[label= \rm (\roman*)]
		\setcounter{enumi}{\value{Mayus3}}
		\item \label{whea} $\| \hat \eta-\eta \|_{r-1, S}<\delta$.
		\item \label{whei} $\hat \eta-\eta$ vanishes to order $k-1$ at every point in $\Lambda\cup A$.
		\item \label{whez} The divisor of $\hat \eta$ on $L$ equals that of $\eta$ on $S$.
		\setcounter{Mayus3}{\value{enumi}}
	\end{enumerate}
	By \ref{whf3z}, \ref{whez}, and \eqref{eq:f1f2}, the function $\hat f_3^2/\hat \eta$ is holomorphic on $L$, and hence so are
	\begin{equation}\label{eq:whf1f2}
		 \hat f_1 =   \frac{1}{2} \left( \hat \eta - \frac{ \hat f_3^2}{ \hat \eta }\right) \quad \text{and}\quad \hat f_2 = -\frac{\igot}{2} \left( \hat \eta + \frac{\hat f_3^2}{\hat \eta } \right). 
	\end{equation}
Moreover, $\hat f_3^2/\hat \eta$ and $f_3^2/\eta$ have the same divisor on $S$, and hence, in view of \ref{whez}, $\hat \eta$ and $\hat f_3^2/\hat \eta$ have no common zeros on $L$ (recall that $\eta$ and $f_3^2/\eta$ have no common zeros on $S$ and $\hat \eta$ vanishes nowhere on $L\setminus S$). This shows that the holomorphic map $\hat f:=(\hat f_1, \hat f_2,\hat f_3)\colon L\to \C^3$ has range in $\mathbf{A}_*$.
It enjoys the following properties: 
	\begin{enumerate}[label= \rm (\roman*)]
		\setcounter{enumi}{\value{Mayus3}}
		\item \label{whfa} $\| \hat f-f \|_{r-1, S}<\delta$.
		\item \label{whfi} $\hat f-f$ vanishes to order $k-1$ at every point in $\Lambda$.
		\item \label{whfnf} $\hat f \colon L \to \mathbf{A}_*$ is nonflat.
		\setcounter{Mayus3}{\value{enumi}}
	\end{enumerate}
Indeed, \ref{whfa} follows from \ref{whf3a}, \eqref{eq:f1f2},  \ref{whea}, and  \eqref{eq:whf1f2} whenever the approximations are close enough, while \ref{whfi} follows from \ref{whf3i}, \eqref{eq:f1f2},  \ref{whei}, and  \eqref{eq:whf1f2}.	Property \ref{whfnf} holds provided we choose $\wh f$ close enough to $f$ on $S$; recall that $L$ is connected and $X$ is nonflat on $S$, hence so is $f$.

Properties \ref{whX3i} and \eqref{eq:f3} show that  $\int_{C_j} \hat f_3 \theta=  \int_{C_j} f_3 \theta$ for all $j=1,\ldots,n$. However, the analogous conditions for $\hat f_1$ and $\hat f_2$ may not happen, and hence $\Pscr(\hat f)$ need not agree with $\Pscr(f)$. To solve this issue we 
replace $f$ by $\hat f$ in the spray \eqref{eq::spraydef}. This way, if the approximation of $f$ by $\hat f$ in \ref{whfa} is close enough and up to passing to a smaller neighborhood $U$ of $0$ in $\C^{3n}$, we obtain a well defined holomorphic map $\Phi_{\hat f} \colon L \times U \to \mathbf{A}_*$, given by
\[
	\Phi_{\hat f} (p, t) = \phi^{1,1}_{g h_{1,1} (p) t_{1,1}} \circ \ldots  \circ \phi^{n,3}_{g h_{n,3} (p) t_{n,3}} (  \hat f (p) )  \in \mathbf{A}_*,  \quad t=(t_{j,l})\in U \subset (\C^3)^n
\]
(recall that $h_{i,j}$ and $g$ are defined on $L$), satisfying the following properties:
\begin{enumerate}[label= \rm (\Roman*)]
		\setcounter{enumi}{\value{Mayus4}}
		\item \label{corewhf} $\Phi_{\hat f} (\cdot , 0) = \hat f$ (i.e., $\hat f$ is the core of the spray).
		\item \label{intwhf} $\hat f - \Phi_{\hat f} ( \cdot, t)$ vanishes to order $k$ everywhere on $\Lambda$ for every $t \in U$.
		\item \label{eq::dominationL} $\Phi_{\hat f}$ is $\Pscr$-dominating in the sense that the derivative at $t= 0$
		\[
		\frac{\partial}{\partial t} \bigg|_{t= 0} \Pscr (  \Phi_{\hat f } ( \cdot, t)  ) \colon ( \C^3 )^n \to ( \C^3 )^n \;  \text{ is an isomorphism.}
		\]
	\setcounter{Mayus4}{\value{enumi}}
\end{enumerate}
Moreover, assuming that  
the approximation in \ref{whfa} is close enough, properties \ref{coref} and \ref{corewhf} ensure that
\begin{enumerate}[label=(\Roman*)]
		\setcounter{enumi}{\value{Mayus4}}
	\item \label{approxsprays} $\|  \Phi_f  - \Phi_{\hat f}    \|_{r-1, S\times U} < \delta$.
\end{enumerate} 
In view of \ref{coref}, \ref{eq::domination}, \ref{corewhf}, \ref{eq::dominationL}, \ref{approxsprays}, and assuming the approximation in \ref{whfa} is close enough, the inverse function theorem gives us a point $\zeta_0 \in U\subset \c^{3n}$ close to $0$ such that
	\begin{equation}\label{eq::periodsolved}
		\Pscr \big(  \Phi_{\hat f} ( \cdot, \zeta_0)  \big) = \Pscr( f).
	\end{equation}
Since $f\theta= d X$ is exact, this implies that $\Phi_{\hat f} ( \cdot, \zeta_0)\theta$ is exact as well, and hence we obtain a holomorphic map $\wt X=(\wt X_1,\wt X_2,\wt X_3)\colon L\to \C^3 $ given by
	\begin{equation}\label{eq:tildeX}
		\wt X (p) =  X(p_0)+\int_{p_0}^p   \Phi_{\hat  f}  ( \cdot , \zeta_0) \theta \in \C^3 , \quad p \in L.
	\end{equation} 
	We claim that $\wt X$ satisfies the conclusion of the lemma. Indeed, since  $\Phi_{\hat f}  ( \cdot , \zeta_0)$ has range in $\mathbf{A}_*$, we have that $\wt X$ is a holomorphic null immersion. 
	Condition \ref{conda} in the lemma follows from \ref{approxsprays} and the fact that $\zeta_0$ can be chosen as close to $0$ as desired, provided that $\delta>0$ is chosen sufficiently small.
	 This and the nonflatness of $X$ also guarantee that $\wt X$ is nonflat. 
	 For \ref{condb}, note that \eqref{eq::periodsolved} guarantees that $\wt X(p)=X(p)$ for all $p\in \Lambda$, while the higher order interpolation follows from \ref{intwhf} and \ref{whfi}.
	 Finally, to check \ref{condc} it suffices to see that $\wt X_3^{-1}(0)=\wh X_3^{-1}(0)$; this and  \ref{whX3z} complete the task. Indeed, $\wh X_3^{-1}(0)= X_3^{-1}(0) \subset \Lambda\subset\mathring S$ by \ref{whX3z}. Therefore, \eqref{eq:X3Lambda} and \ref{condb} ensure that $\wh X_3^{-1}(0) \subset \wt X_3^{-1}(0)$. This, \eqref{eq:k>}, \ref{whX3i}, \eqref{eq:f3}, \ref{intwhf}, and \eqref{eq:tildeX} show that the divisor of $\wt X_3$ on $L$ is greater than or equal to the one of $\wh X_3$. For the other inclusion, \ref{whX3i} gives that $\wh X_3(p_0)=X_3(p_0)$, which together with \eqref{eq:f3}, \ref{corewhf}, and \eqref{eq:tildeX} guarantee that $\wt X_3$ can be chosen arbitrarily close to $\wh X_3$ uniformly on $L$. Since the zeros of $\wh X_3$ lie in $\Lambda\subset \mathring L$, Hurwitz theorem implies that $\wt X_3^{-1}(0)=\wh X_3^{-1}(0)$. 
	 \end{proof}	
	
For the general case of the lemma, the Runge-Mergelyan approximation theorem with jet interpolation for generalized null curves (see \cite[Theorem 1.2]{AlarconCastro-Infantes2019APDE} or \cite[Theorem 3.6.2]{AlarconForstnericLopez2021Book}) enables us to approximate $X$ in the $\Cscr^r(S)$-norm by a nonflat holomorphic null curve $Y=(Y_1,Y_2,Y_3):L\to \C^3$ such that $Y-X$ vanishes to order $k$ everywhere on $\Lambda$. Note that $Y$ formally satisfies conditions \ref{conda} and \ref{condb}  but it need not meet \ref{condc}. By \ref{condb}, \eqref{eq:X3Lambda}, and \eqref{eq:k>}, and assuming that the approximation of $X$ by $Y$ in the $\Cscr^r(S)$-norm  is close enough, Hurwitz theorem guarantees that $Y_3^{-1}(0)\cap S=X_3^{-1}(0)\subset\Lambda\subset\mathring S$, while the set $Q:=Y_3^{-1}(0)\cap \mathring L\setminus S$ is finite by the identity principle. Since the compact set $S$ is Runge, we can choose a family of pairwise disjoint smooth Jordan arcs $\varsigma_q\subset L\setminus S$, $q\in Q$, such that $\varsigma_q$ has $q$ as an endpoint, the other endpoint of $\varsigma_q$ lies in $bL$, $\varsigma_q$ intersects $bL$ transversely there, and $\varsigma_q$ is otherwise disjoint from $bL$. Set $\varsigma=\bigcup_{q\in Q}\varsigma_q$ and note that $\mathring L\setminus \varsigma$ is an open connected domain, $S\subset \mathring L\setminus \varsigma$, and $(Y_3|_{\mathring L\setminus \varsigma})^{-1}(0)=X_3^{-1}(0)$. Let $S'\subset\mathring L\setminus \varsigma$ be a connected smoothly bounded Runge compact domain such that $S\subset\mathring S'$ and $S'$ is a strong deformation retract of $L$. Since the connected compact domain $L\subset M$ is Runge by the initial assumption, it follows that $S'$ is Runge in $M$ as well. Moreover, we have that $(Y_3|_{S'})^{-1}(0)=X_3^{-1}(0)\subset\Lambda\subset\mathring S$. This reduces the proof of Lemma \ref{lemma::ind} to the special case in Claim \ref{cl:lemma}.
\end{proof}

%
%
\section{A Runge theorem for almost proper null curves in $\C^2\times\C^*$}\label{sec:Runge}

\noindent In this section we prove the following Runge-Mergelyan approximation theorem with jet interpolation for almost proper holomorphic null curves in $\C^3$ with control on the zero set of the third component function. Recall Definitions \ref{def:admissible} and \ref{def:gnc}.
\begin{theorem}\label{th::Runge} 
	Let  $M$ be an open Riemann surface, $\theta$ be a nowhere vanishing holomorphic $1$-form on $M$, $S \subset M$ be a Runge admissible subset, and $\Lambda\subset M$ be a closed discrete subset.
	Also let $(X=(X_1,X_2,X_3),f \theta)$ be a generalized null curve $S \to \C^3$ of class $\Ascr^r(S)$ $(r\ge 1)$ which is a holomorphic null curve on a neighborhood of $\Lambda$, and assume that 
	\begin{equation}\label{eq:Lambda}
		X_3^{-1} (0) \subset \Lambda.
	\end{equation}
	Given a map $k\colon \Lambda \to \n$
	and a number $\varepsilon>0$, there exists a holomorphic null curve  $\wt X=(\wt X_1,\wt X_2,\wt X_3) \colon M \to \C^3$ such that:
	\begin{enumerate}[label={\rm (\alph*)}]
		\item  \label{rconda} $\| \wt X - X \|_{r, S}   < \varepsilon$.
		\item \label{rcondb} $\wt X - X$ vanishes to order $k (p)$ at every point $p \in \Lambda$.
		\item  \label{rcondc}$\wt X_3^{-1}(0)=X_3^{-1}(0)\subset\Lambda$. 
		\item  \label{rcondd}Both $(\wt X_1,\wt X_2) \colon M \to \C^2$ and $(1, \wt X_1, \wt X_2)/\wt X_3 \colon M \setminus X_3^{-1}(0) \to \C^3$ are almost proper maps. In particular, $\wt X\colon M\to\C^3$ is almost proper as well.
	\end{enumerate}
	Furthermore, if $X|_\Lambda \colon \Lambda \to \C^3$ is injective then there is an injective holomorphic null curve $\wt X \colon M \to \C^3$ with the above properties. 
\end{theorem}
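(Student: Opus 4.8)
The plan is to deduce Theorem~\ref{th::Runge} from Lemma~\ref{lemma::ind} by a recursive procedure along a normal exhaustion of $M$, into which I fold a growth mechanism producing the almost properness in~\ref{rcondd}. First I would fix a smooth exhaustion function and use it to build an increasing sequence of smoothly bounded compact Runge domains $L_0\subset L_1\subset L_2\subset\cdots$ with $\bigcup_j L_j=M$, $L_j\subset\mathring L_{j+1}$, $S\cup X_3^{-1}(0)\subset\mathring L_0$, $\Lambda\cap bL_j=\varnothing$, and such that each $L_{j+1}$ arises from $L_j$ by attaching either a collar or a single handle. Both cases are covered by Lemma~\ref{lemma::ind}: in a topology-changing step one feeds the lemma the admissible set $S_{j+1}=L_j\cup\gamma_{j+1}$, where the arc $\gamma_{j+1}\subset L_{j+1}\setminus\mathring L_j$ realizes the change of topology, so that $L_{j+1}$ deformation retracts onto $S_{j+1}$. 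Applying the lemma inductively with interpolation data $k|_{\Lambda\cap L_{j+1}}$ (a finite set) and summable approximation errors $\varepsilon_{j+1}>0$ yields holomorphic null curves $X^{(j)}\colon L_j\to\C^3$ with $X^{(j+1)}$ close to $X^{(j)}$ in $\Cscr^r(L_j)$, matching the prescribed jets on $\Lambda\cap L_j$ and satisfying $(X^{(j)}_3)^{-1}(0)=X_3^{-1}(0)$. For $\varepsilon_j\downarrow0$ fast enough the sequence converges uniformly on compacta to a holomorphic null curve $\wt X\colon M\to\C^3$; properties~\ref{rconda} and~\ref{rcondb} are then immediate, and~\ref{rcondc} holds because the finite set $X_3^{-1}(0)\subset\mathring L_0$ is frozen throughout and Hurwitz' theorem forbids new zeros in the limit.

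The heart of the argument is the almost properness in~\ref{rcondd}, which I would build into the induction through the requirement
\[
\min_{bL_j}\bigl|(X^{(j)}_1,X^{(j)}_2)\bigr|\;>\;j\,\Bigl(1+\max_{bL_j}|X^{(j)}_3|\Bigr).
\]
To enforce it I first fix $X^{(j)}_3$, which determines the right-hand constant, and then inflate the first two components: I enlarge the admissible set at step $j$ by a family of arcs $\varsigma\subset L_{j+1}\setminus\mathring L_j$ whose outer endpoints are $\delta$-dense in $bL_{j+1}$, extend the generalized null curve along them so that $|(X_1,X_2)|$ is huge near those endpoints, and apply the Mergelyan part of Lemma~\ref{lemma::ind}; taking $L_{j+1}$ to be a sufficiently thin neighbourhood of this admissible set propagates the large modulus to the whole of $bL_{j+1}$. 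The parametrization $\eta=f_1+\igot f_2$, $f_3=dX_3/\theta$ of $\mathbf{A}_*$ from the proof of Lemma~\ref{lemma::ind} (see~\eqref{eq:f1f2}) is what makes this possible: one can inflate $\eta$, and hence $(X_1,X_2)$, while leaving $f_3$ and $X_3$ untouched, so that $(X_1,X_2)$ dominates $X_3$ on $bL_{j+1}$. As later steps move $X^{(j)}$ on $bL_j$ by only the summable amount $\sum_{i>j}\varepsilon_i$, the limit obeys $\min_{bL_j}|(\wt X_1,\wt X_2)|\to\infty$ and $\min_{bL_j}|(\wt X_1,\wt X_2)|/\max_{bL_j}|\wt X_3|\to\infty$. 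Since the curves $bL_j$ separate $M$, every connected component of $(\wt X_1,\wt X_2)^{-1}(K)$ with $K\subset\C^2$ compact avoids $bL_j$ for large $j$ and is hence compact, giving almost properness of $(\wt X_1,\wt X_2)$; for $(1,\wt X_1,\wt X_2)/\wt X_3$ on $M\setminus X_3^{-1}(0)$ the same separation argument works, with escape to infinity near each puncture $p\in X_3^{-1}(0)$ coming from $1/\wt X_3\to\infty$ and escape along $bL_j$ coming from $|(1,\wt X_1,\wt X_2)/\wt X_3|\ge|(\wt X_1,\wt X_2)|/|\wt X_3|\to\infty$. Almost properness of $\wt X$ itself follows.

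To obtain injectivity when $X|_\Lambda$ is injective, I would carry out the same recursion within the class of injective null curves. A generic holomorphic null curve $L_j\to\C^3$ is an embedding, since $\dim_\C\C^3=3\ge2\cdot1+1$, so the general position results for null curves (\cite[\textsection3.4]{AlarconForstnericLopez2021Book}) let me perturb $X^{(j)}$ to an injective null curve while retaining~\ref{rconda}--\ref{rcondd}; the interpolation keeps the map injective on $\Lambda$ because $X|_\Lambda$ is. Choosing each approximation close enough that injectivity on $L_{j-1}$ survives, the limit $\wt X\colon M\to\C^3$ is injective.

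The step I expect to be the main obstacle is the almost properness clause~\ref{rcondd}: one must push $|(\wt X_1,\wt X_2)|$ to infinity over the \emph{entire} boundary curves $bL_j$, make it dominate $|\wt X_3|$ there, and at the same time keep the zero divisor of $\wt X_3$ frozen at $X_3^{-1}(0)$, all under the null relation $f_1^2+f_2^2+f_3^2=0$ and the jet interpolation on $\Lambda$. This tension between enlarging $(X_1,X_2)$ and controlling $X_3\in\C^*$ is exactly what separates the target $\C^2\times\C^*$ from the classical $\C^3$ situation, and it is resolved by the decoupling of $\eta$ from $f_3$ inherited from Lemma~\ref{lemma::ind}.
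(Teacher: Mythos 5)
Your overall skeleton (exhaustion by Runge domains, inductive use of Lemma~\ref{lemma::ind} with summable errors, Hurwitz for~\ref{rcondc}, general position for injectivity) matches the paper's proof, but the mechanism you propose for the growth condition behind~\ref{rcondd} — the heart of the matter, as you yourself note — does not work. You want $\min_{bL_{j+1}}|(X_1,X_2)|$ to exceed $(j+1)\bigl(1+\max_{bL_{j+1}}|X_3|\bigr)$ by prescribing huge values of $(X_1,X_2)$ near the outer endpoints of finitely many arcs $\varsigma$ and then taking $L_{j+1}$ to be a thin neighbourhood of $L_j\cup\varsigma$. Two things go wrong. First, the boundary of \emph{any} neighbourhood of $L_j\cup\varsigma$ must run alongside $L_j$ (between the arcs) and along the inner portions of the arcs, where by continuity of the generalized null curve the modulus is still at the old level $j$ (each arc carries a transition region from level-$j$ values to huge values); hence the minimum of $|(X_1,X_2)|$ over the new boundary stays at level $j$, no matter how dense the outer endpoints are. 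Second, the claim that largeness ``propagates'' from the admissible set to all nearby points is circular: the required thinness of $L_{j+1}$ (or density $\delta$ of the endpoints) depends on the modulus of continuity of the approximating null curve, which exists only \emph{after} Lemma~\ref{lemma::ind} has been applied, while the lemma needs the target domain as input; and no a priori bound is available, since Mergelyan approximation controls nothing off the admissible set, in the gaps between the arcs. Arcs-plus-Mergelyan (the Jorge--Xavier labyrinth idea) yields completeness but cannot control a whole boundary curve. This is exactly why the paper does not use arcs here: it uses Lemma~\ref{lemma::ap} (an extension of \cite[Lemma 4]{AlarconForstneric2015MA}, proved via the Riemann--Hilbert problem for null curves), which raises $\mathfrak{m}(X)=\frac1{\sqrt2}\max\{|X_1+\igot X_2|,|X_1-\igot X_2|\}$ above any prescribed $\hat s$ on the \emph{entire} boundary while approximating on a smaller compact set, interpolating on $\Lambda$, and — essential for the ratio in~\ref{rcondd} — keeping $\|\wh X_3-X_3\|_{0,\overline M}$ small on the whole surface. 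Your instinct that $(X_1,X_2)$ must be inflated while $X_3$ stays put is correct, but only the Riemann--Hilbert mechanism delivers this on all of $bL_{j+1}$.

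There is a second, independent gap: you place $X_3^{-1}(0)$ inside $\mathring L_0$ and call it finite. But $X_3^{-1}(0)\subset\Lambda$ is only a closed discrete subset of $M$, possibly infinite and unbounded, and this generality is needed: in the proof of Theorem~\ref{th:intro-Bryant} the zero set of the third component is the (possibly infinite) set $E$ of prescribed ends. Zeros of $X_3$ therefore enter at every stage of the exhaustion, not only in $L_0$. The paper handles this by fixing a neighbourhood $\Omega$ of all of $\Lambda$, extending $X^n$ by $X$ on $\Omega\cap M_{n+1}\setminus M_n$ before each application of Lemma~\ref{lemma::ind}, and proving the zero-set identity $(X_3^n)^{-1}(0)=X_3^{-1}(0)\cap M_n$ inductively; it also uses a separate exhaustion of $M\setminus X_3^{-1}(0)$ by domains $W_n'$ obtained by deleting small discs around the zeros, on whose boundaries $|\wt X_3|<1/n$, to get almost properness of $(1,\wt X_1,\wt X_2)/\wt X_3$. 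Your treatment of the punctures is right in spirit but rests on the unjustified compactness of $X_3^{-1}(0)$, and your induction never feeds the jets of $X$ at the newly appearing points of $\Lambda$ into the admissible set, which is what the extension over $\Omega$ accomplishes.
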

Theorem \ref{th:intro-C2xC*} follows as an immediate corollary of Theorem \ref{th::Runge}. Indeed, property \ref{rcondc} ensures that $\wt X$ has range in $\C^2 \times \C^*$ whenever $X$ does so. On the other hand, recall that a path $\gamma \colon [0,1) \to M$ on a smooth surface $M$ is  {\em divergent} if for any compact subset  $K \subset M$ there exists $t_0 \in [0, 1)$ such that $ \gamma (t) \cap K = \varnothing$  for all $t >t_0$; i.e., if $\gamma$ is a proper map. Given a smooth immersion $X \colon M \to \C^n$ $(n \geq 2)$ we denote by $\dist_X \colon M \times M \to \R_+$ the Riemannian distance function induced on $M$ by the Euclidean metric of $\C^n$ via $X$; i.e.
\[
\dist_X (p,q) := \inf \{  \length ( X (\gamma)) : \gamma \subset M \text{ arc connecting $p$ and $q$}   \},\quad p,q \in M.
\]
The immersion $X \colon M \to \C^n$ is said to be  {\em complete} if $\length (X \circ \gamma) =+ \infty$ for every divergent path $\gamma$ in $M$. Equivalently, if the Riemannian metric on $M$ induced by $\dist_X$ is complete in the classical sense. On the other hand, when $X$ is almost proper then $X\circ\gamma\colon [0,1)\to\C^n$ is unbounded for every divergent path $\gamma$ on $M$; hence almost proper immersions $M\to\C^n$ are complete.

We shall need some preparations before proving the theorem. 
Given a map $f =(f_1,f_2,f_3)\colon W \to \C^3$, we define the function
\begin{equation}\label{eq::defm}
	\mathfrak{m}(f) := \frac1{\sqrt{2}}\max \{   |f_1+\igot f_2|\,,\, |f_1-\igot f_2|   \} \colon W \to [0,+\infty).
\end{equation}
Note that $\mathfrak{m}(f) \leq |(f_1,f_2)| \le | f |$ everywhere in $W$ by the Cauchy-Schwarz inequality.
We shall make use of the following extension of \cite[Lemma 4]{AlarconForstneric2015MA}. 
\begin{lemma}\label{lemma::ap}
	Let $M= \overline M\setminus b\overline M$ be a bordered Riemann surface,  $K\subset M$ be a smoothly bounded compact domain,  $\Lambda \subset M$ be a finite set,  $s >0$ be a number, and  $X = (X_1, X_2, X_3) \colon \overline M \to \C^3$  be a null curve of class  $\Ascr^r(\overline M)$ $(r\ge 1)$, and assume that
		$\mathfrak{m}(X) > s$ on $\overline M \setminus \mathring K$.
	Given numbers $\varepsilon>0$, $\hat s > s$, and $k \in \n$, there exists a null curve $\wh X  = ( \wh X_1, \wh X_2, \wh X_3) \colon \overline M \to \C^3$ of class  $\Ascr^r(\overline M)$ satisfying the following:
	\begin{enumerate}[label=\rm (\roman*)]
		\item \label{L1} $\| \wh X - X \|_{r, K} < \varepsilon$.
		\item \label{L2} $\mathfrak{m} (\wh X) > \hat s $ on $b\overline M$.
		\item  \label{L3} $\mathfrak{m}(\wh X) > s$ on $\overline M \setminus \mathring K$.
		\item \label{L4} $ \| \wh X_3 - X_3 \|_{0, \overline M } < \varepsilon.$
		\item  \label{L5} $\wh X - X $ vanishes to order $k$ everywhere on $\Lambda$.
	\end{enumerate}
\end{lemma}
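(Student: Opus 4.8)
The guiding idea is to deform only the first two coordinates of $X$, leaving the third one untouched, so that condition \ref{L4} holds trivially (indeed with $\widehat X_3=X_3$, whence also the zero set of $X_3$ is preserved---precisely the feature for which this refinement of \cite[Lemma 4]{AlarconForstneric2015MA} is designed). Write $u=X_1+\igot X_2$ and $v=X_1-\igot X_2$, set $\eta=f_1+\igot f_2\in\Oscr(\overline M)$, and recall from \eqref{eq:f1f2} that $du=\eta\theta$, $dv=-(f_3^2/\eta)\theta$, while the null relation reads $du\,dv=-(dX_3)^2$. In these terms $\mathfrak{m}(X)=\tfrac1{\sqrt2}\max\{|u|,|v|\}$ by \eqref{eq::defm}. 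The crucial observation is that replacing $\eta$ by $\phi\eta$ for any zero-free $\phi\in\Oscr(\overline M)$ yields the map $\big(\tfrac12(\phi\eta-f_3^2/(\phi\eta)),\,-\tfrac\igot2(\phi\eta+f_3^2/(\phi\eta)),\,f_3\big)$, which again has range in $\mathbf{A}_*$ (the no-common-zeros condition for $\phi\eta$ and $f_3^2/(\phi\eta)$ being inherited from that for $\eta$ and $f_3^2/\eta$, cf.\ \eqref{eq:f1f2}), keeps the third component $f_3$ unchanged, and transforms $du\mapsto\phi\,du$ and $dv\mapsto\phi^{-1}dv$.

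Thus the plan is to produce a zero-free $\phi\in\Oscr(\overline M)$ such that: $\phi-1$ is $\Cscr^{r-1}$-small on $K$ and vanishes to order $k$ on $\Lambda$; $\phi\equiv1$ off a thin collar of $b\overline M$; and the primitives $\widehat u=u(p_0)+\int_{p_0}^{\,\cdot}\phi\,du$ and $\widehat v=v(p_0)+\int_{p_0}^{\,\cdot}\phi^{-1}dv$ are single-valued on $\overline M$ with $\max\{|\widehat u|,|\widehat v|\}>\sqrt2\,\hat s$ on $b\overline M$. Then $\widehat X=\big(\tfrac12(\widehat u+\widehat v),\,\tfrac1{2\igot}(\widehat u-\widehat v),\,X_3\big)$ is the desired null curve: \ref{L4} is immediate; \ref{L1} and \ref{L5} follow from the first requirement; \ref{L3} follows from the second together with $\mathfrak{m}(X)>s$ on $\overline M\setminus\mathring K$ and the boosting inside the collar; and \ref{L2} is the third requirement.

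To build $\phi$ I would proceed as in Section \ref{sec:lemma}. Since $\mathfrak{m}(X)>s$ on the compact set $\overline M\setminus\mathring K$, at each point one of $|u|,|v|$ exceeds $\sqrt2\,s$; by continuity and compactness one covers $b\overline M$ by finitely many closed arcs, each labelled by whichever coordinate dominates on it. Attach to $K$ a Runge admissible set $S'$ (cf.\ Definitions \ref{def:admissible} and \ref{def:gnc}) consisting of smooth arcs running through the collar out to---and along, $\Cscr^0$-close to---the boundary curves, together with closed curves generating $\Hcal_1(\overline M;\Z)$, so that $S'$ is a strong deformation retract of $\overline M$. On the one-dimensional part of $S'$ the data may be prescribed freely; I would prescribe $\phi$ there to equal $1$ near $K$ and to have large modulus with phase opposite to that of $\eta\theta$ (resp.\ $-(f_3^2/\eta)\theta$) along the $u$-arcs (resp.\ $v$-arcs), so that the line integral---not merely the integrand---of $\phi\,du$ (resp.\ $\phi^{-1}dv$) grows past $\sqrt2\,\hat s$ all along $b\overline M$. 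Because $\c^*$ is Oka, the Runge approximation theorem with jet interpolation for $\c^*$-valued maps (\cite[Theorem 1.13.3]{AlarconForstnericLopez2021Book}, used already for $\wh X_3$ and $\sigma$ in the proof of Lemma \ref{lemma::ind}) then supplies a genuine zero-free $\phi\in\Oscr(\overline M)$ realizing these demands up to the period obstruction.

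The main obstacle is exactly this period obstruction: for $\widehat u,\widehat v$ to be single-valued, the periods of $\phi\eta\theta$ and of $(f_3^2/(\phi\eta))\theta$ over the generators of $\Hcal_1(\overline M;\Z)$ must all vanish (those of $f_3\theta$ already do and stay untouched). I would resolve it by the period-dominating spray technique of \cite[Lemma 3.2.1]{AlarconForstnericLopez2021Book}, exactly as in the proof of Lemma \ref{lemma::ind}: replace $\phi$ by $\phi_t=\phi\cdot\exp\!\big(\sum_j t_j\,g\,h_j\big)$, where the $h_j$ are supported near the homology generators, $g$ vanishes to order $k$ on $\Lambda$ and the spray reduces to $\phi$ on $K$, chosen so that the map $t\mapsto\Pscr\big(\phi_t\eta,\,-f_3^2/(\phi_t\eta)\big)$ is a submersion at $t=0$; the inverse function theorem then yields a parameter $t_0\approx0$ killing all periods while preserving the three requirements above and, crucially, keeping $f_3$---hence $X_3$---fixed. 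The remaining checks are routine: closeness on $K$ and order-$k$ contact on $\Lambda$ from $\phi_{t_0}\approx1$ there, \ref{L3} from the collar being thin, and the null and immersion properties from $(\phi_{t_0}\eta,f_3)$ mapping into $\mathbf{A}_*$.
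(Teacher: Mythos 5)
Your strategy---a L\'opez--Ros type multiplicative deformation $\eta\mapsto\phi\eta$ that keeps $X_3$ exactly fixed---is genuinely different from the paper's, which follows \cite[proof of Lemma 4]{AlarconForstneric2015MA} and deforms the curve near $b\overline M$ by solving Riemann--Hilbert problems with attached null discs (upgraded with jet interpolation via \cite[Theorem 6.4.2]{AlarconForstnericLopez2021Book}); there $X_3$ is only kept approximately fixed, which is why the paper's \ref{L4} is an inequality rather than an identity. Unfortunately, your argument has a genuine gap exactly where the lemma is hard, namely in conditions \ref{L2} and \ref{L3}. Runge--Mergelyan approximation with jet interpolation controls $\phi$ only on the admissible set $S'$; on $\overline M\setminus S'$ (the gaps between your arcs, and the region between the near-boundary arcs and $b\overline M$ itself) the function $\phi$, hence both primitives $\wh u$ and $\wh v$, are completely uncontrolled. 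Since \ref{L2} and \ref{L3} are pointwise lower bounds on all of $b\overline M$ and all of $\overline M\setminus\mathring K$, not merely on $S'$, nothing in your construction prevents $\max\{|\wh u|,|\wh v|\}$ from dipping below $\sqrt2\,s$ at collar points off $S'$. ``The collar being thin'' does not help: thinness bounds neither $\phi$ nor $\phi^{-1}$ there.

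Moreover, even on the arcs the integral-based boosting runs into an obstruction coming from single-valuedness. After the periods are killed, $d\wh u=\phi\,du$ is exact, so its integral over a closed curve running just inside a boundary component of $\overline M$ vanishes; hence if $|\wh u|$ grows past $\sqrt2\,\hat s$ along a boosting arc, it must return to a moderate value somewhere on that curve---necessarily in a gap of $S'$, where you have no guarantee that $|\wh v|$ is large---and at the initial point of each $u$-arc you only know $|\wh u|\approx|u|>\sqrt2\,s$, not $>\sqrt2\,\hat s$. This cyclic bookkeeping (arranging that the other coordinate is large precisely where the boosted one must come back down) is the actual content of \cite[Lemma 4]{AlarconForstneric2015MA}, and the Riemann--Hilbert method resolves it because the deformation there is pointwise and additive rather than integral: on a boundary arc one pushes in the constant null direction $(1,\pm\igot,0)$, which leaves one of $u,v$ (and essentially $X_3$) unchanged while making the other large at every point of the arc, yielding \ref{L2} and \ref{L3} simultaneously. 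Your period-domination step and the verification of \ref{L1}, \ref{L4}, \ref{L5} are plausible (and \ref{L4} even comes out sharper, with $\wh X_3=X_3$), but without a replacement for the pointwise boundary and collar estimates the proof does not close.
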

Recall that a bordered Riemann surface is an open connected
Riemann surface $M$ that is the interior, $M= \overline M\setminus b\overline M$, of a compact one dimensional
complex manifold $\overline M$ with smooth boundary $b\overline M$ consisting of finitely many closed
Jordan curves; such an $\overline M$ is called a compact bordered Riemann surface. 
The only improvements of this lemma with respect to \cite[Lemma 4]{AlarconForstneric2015MA} concern conditions \ref{L4} and \ref{L5}. On the one hand, the interpolation condition \ref{L5} can be granted by following \cite[proof of Lemma 4]{AlarconForstneric2015MA} but applying \cite[Theorem 6.4.2]{AlarconForstnericLopez2021Book} (the Riemann-Hilbert problem for null curves in $\C^3$ with jet interpolation) instead of  
 the more basic result \cite[Theorem 4]{AlarconForstneric2015MA} which does not include interpolation. Likewise, one has to apply Mergelyan theorem with jet interpolation when required. On the other hand, condition \ref{L4} is stronger than condition (L4) in \cite[Lemma 4]{AlarconForstneric2015MA} to the effect that $\|\wh X_3\|_{0, \overline M}<\|X_3\|_{0, \overline M}+\varepsilon$. To guarantee \ref{L4} only very minor modifications of the proof of \cite[Lemma 4]{AlarconForstneric2015MA} are required. In particular, property (b4) in that proof must be replaced by the more accurate condition that $|\langle z,(0,0,1)\rangle-F_3(p_{i,j})|<\epsilon/2$ for all $z\in\lambda_{i,j}$ (the given null curve in  \cite[Lemma 4]{AlarconForstneric2015MA} is denoted by $F=(F_1,F_2,F_3)$). Likewise, conditions (c4) and (e5) in  \cite[proof of Lemma 4]{AlarconForstneric2015MA} must be replaced by $\|F_3^0-F_3\|_{0,\overline M}<\epsilon/2$ and $\|\wh F_3-F_3^0\|_{0,\overline M}<\epsilon/2$, respectively. Besides this, the proof is the same.
 %
 %
\begin{proof}[Proof of Theorem \ref{th::Runge}]
For simplicity of exposition, we assume that $X|_\Lambda$ is injective, the proof is otherwise simpler.
We also assume that $0<\varepsilon<1$ and
\begin{equation}\label{eq::condk}
		k(p) > \ord_p (X_3) \quad \text{for every $p\in X_3^{-1}(0)$};
\end{equation}
see \eqref{eq:Lambda} and recall the notation in \eqref{eq:orden}.
Moreover, arguing as in the final part of the proof of Lemma \ref{lemma::ind}, Hurwitz theorem and 
the Runge-Mergelyan theorem with jet interpolation for generalized null curves in \cite[Theorem 3.6.2]{AlarconForstnericLopez2021Book} enable us to assume that $M_0 := S$ is a connected, smoothly bounded, Runge compact domain and $X$ is a nonflat injective holomorphic null immersion on a neighborhood of  $S\cup\Lambda$ (so an embedding on $M_0$; note that the mentioned approximation result also ensures that holomorphic null curves in $\C^3$ are generically embedded).
Finally, since $X$ is holomorphic on a neighborhood of $\Lambda$, we may also assume that $\Lambda \cap bM_0 = \varnothing$. 	
Let 
\begin{equation}\label{eq:Mncup}
	M_1\Subset M_2\Subset M_3\Subset\cdots \subset \bigcup_{n\in \N} M_n=M
\end{equation}
be an exhaustion of $M$ by connected, smoothly bounded, Runge compact domains such that $M_0 \subset \mathring M_1$ and $\Lambda \cap b M_n = \varnothing$ for every $n \in \n$.  
Choose a closed neighborhood $\Omega\subset M$ of $\Lambda$ whose connected components are smoothly bounded closed discs, each one containing a unique point of $\Lambda$, such that $\Omega \cap  bM_n =\varnothing$ for all $n\ge 0$ and $X \colon M_0\cup \Omega \to \C^3$ is defined as a holomorphic null immersion. 
Choose $\varepsilon_0<\varepsilon/2$ and set $X^0=X|_{M_0}$. We shall inductively construct a sequence of numbers $\varepsilon_n>0$ and nonflat holomorphic null embeddings 
	$X^n = (X^n_1, X^n_2, X^n_3) \colon M_n \to \C^3$
	satisfying the following conditions for all $n\in\N$.
	\begin{enumerate}[label=(\alph*$_n$)]
		\newcounter{an}\setcounter{an}{\value{enumi}}
		\item \label{seqa} $\|X^n - X^{n-1}\|_{r, M_{n-1}}< \varepsilon_{n-1}$. 
		\item \label{seqi} $X^n - X$ vanishes to order $k(p)$ at every point $p \in\Lambda\cap M_n$.
		\item \label{seqX3} $(X^n_3)^{-1} (0) = X_3^{-1}(0)\cap M_n \subset \Lambda$.
		\item \label{seqap} $ \mathfrak{m}(X^n) >n \, (\|X^n_3\|_{0, bM_n} + 1)$ 
		on $bM_n$; see \eqref{eq::defm}.
		
		\item \label{seqep} $0<\varepsilon_{n}<\varepsilon_{n-1}/2$ and if $Y\colon M_n\to\C^3$ is a holomorphic map  that satisfies $\|Y - X^{n}\|_{r, M_n}< 2\varepsilon_{n}$, then $Y$ is a nonflat embedding.
	\end{enumerate}
Provided the existence of sequences $\varepsilon_n >0$ and $X^n \colon M_n \to \C^3$ satisfying \ref{seqa} and \ref{seqep} for all $n \in \n$, there is a limit holomorphic map
	\[
	\wt X:= \lim\limits_{n \to +\infty} X^n\colon M=\bigcup_{n\in \N} M_n\to \C^3
	\]
satisfying 
\begin{equation}\label{eq::n0}
	\|\wt X-X^n\|_{r, M_n}< 2\varepsilon_{n}<\varepsilon\quad \text{for all }n\ge 0.
\end{equation}
Therefore, $\wt X=(\wt X_1,\wt X_2,\wt X_3)\colon M\to\C^3$ is a nonflat injective holomorphic null curve by \ref{seqep} and \eqref{eq:Mncup}.
	We claim that $\wt X$ satisfies the conclusion of the theorem whenever $X^n$ also enjoys properties \ref{seqi}--\ref{seqap} for all $n \in \n$. 
	Indeed, conditions \ref{rconda} and \ref{rcondb} in the statement of the theorem follow from \eqref{eq::n0} and \ref{seqi}. 
	Concerning \ref{rcondc}, we  have
	 $X_3^{-1}(0) \subset \wt X_3^{-1} (0)$ 
	 by \eqref{eq:Lambda} and \ref{rcondb}.
	    To check the converse inclusion $\wt X_3^{-1} (0) \subset X_3^{-1}(0)$, reason by contradiction and assume that there is a point $p\in \wt X_3^{-1} (0) \setminus X_3^{-1}(0)$. Let $U\subset M$ be a smooth open disc neighborhood of $p$ in $M$ such that $X_3\neq 0$ everywhere on $U$. We have that $U\subset M_n$ and $X_3^n\neq 0$ everywhere on $U$  by \ref{seqX3} for all large enough $n\in\N$. Hurwitz theorem then implies that $\wt X_3=0$ everywhere on $U$, hence on $M$, which is a contradiction, recall that $\wt X$ is nonflat. This shows \ref{rcondc}.
 	Finally, let us see \ref{rcondd}. 
 	First, for $(\wt X_1,\wt X_2)$, we use \ref{seqap} and \eqref{eq::n0} to obtain that
	\[
		|(\wt X_1, \wt X_2)| +\varepsilon > |(X_1^n, X_2^n)|  \geq \mathfrak{m}(X^n ) > n  \quad \text{ on  $bM_n$ for all $n \in \n$}.
	\]
 This and \eqref{eq:Mncup} show that $(\wt X_1, \wt X_2) \colon M \to \C^2$ is an almost proper map. Concerning the map $(1, \wt X_1, \wt X_2)/ \wt X_3 \colon M \setminus X_3^{-1}(0) \to \C^3$, which is well defined and holomorphic by property \ref{rcondc}, note that  \ref{seqap}, \eqref{eq::n0}, and the assumption that $\varepsilon<1$ imply that
\begin{equation}\label{eq::normt}
		\Big|   \frac{1}{\wt X_3} (\wt X_1, \wt X_2 ) \Big| > \frac{| \left(X^n_1,  X^n_2 \right)|-\varepsilon}{|X^n_3|+\varepsilon} >  \frac{\mathfrak{m}(X^n)}{| X^n_3 |+1} -1 > n -1\quad \text{ on  $bM_n$, $n\in\N$.}
\end{equation}
On the other hand, for each $n\in\N$ choose a compact domain $W_n$ which is a union of pairwise disjoint smoothly bounded closed discs $W_{n,p}$, $p\in X_3^{-1}(0)\cap M_n$, such that $p\in\mathring W_{n,p}\Subset M_n$. Set $W_n'=M_n\setminus \mathring W_n$, which is a connected smoothly bounded compact domain with $bW_n'=bM_n\cup bW_n=bM_n\cup \bigcup_{p\in X_3^{-1}(0)\cap M_n}bW_{n,p}$. By \ref{rcondc} and \eqref{eq:Mncup}, we may choose the discs $W_{n,p}$ so small at each step that 
\begin{equation}\label{eq:Mn''}
	 W_1' \Subset W_2' \Subset \cdots \subset  \bigcup_{n \in \n} W_n'  = M \setminus X_3^{-1} (0) =M \setminus \wt X_3^{-1} (0)
\end{equation}
and $| \wt X_3 | < 1/n$ on  $b W_n$ for all $n\in\N$.
This inequality together with \eqref{eq::normt} imply that 
$|(1, \wt X_1, \wt X_2)/ \wt X_3| >n-1$ on  $bW_n'$ for all $n\in\N$,
and hence the map $(1, \wt X_1, \wt X_2)/ \wt X_3 \colon M \setminus X_3^{-1}(0) \to \C^3$ is  almost proper in view of \eqref{eq:Mn''}. So, \ref{rcondd} holds.
  
To complete the proof it only remains to explain the induction. Note that $X^0=X|_{M_0}\colon M_0\to\C^3$ satisfies (\hyperref[seqi]{\rm b$_0$})--(\hyperref[seqap]{\rm d$_0$}), while (\hyperref[seqa]{\rm a$_0$}) and (\hyperref[seqep]{\rm e$_0$}) are void. Fix $n\ge 0$, assume that we have a number $\varepsilon_n>0$ and a holomorphic null  immersion $X^n\colon M_n\to\C^3$ satisfying \ref{seqi}--\ref{seqap}, and let us find a number $\varepsilon_{n+1}>0$ and a nonflat holomorphic null embedding $X^{n+1}\colon M_{n+1}\to\C^3$ satisfying (\hyperref[seqa]{\rm a$_{n+1}$})--(\hyperref[seqep]{\rm e$_{n+1}$}). Set 
\begin{equation}\label{eq::condkn}
		k := \max \{k(p) : p \in \Lambda \cap M_{n+1}\} 
\end{equation}
and $M_n ' = M_n \cup (\Omega \cap M_{n+1})$; recall that $\Omega\cap (bM_n\cup bM_{n+1})=\varnothing$, hence $M_n'\subset\mathring M_{n+1}$.
	Note that $M_n'\subset M$ is a (possibly disconnected) smoothly bounded Runge compact domain and $\Lambda \cap M_n ' = \Lambda\cap M_{n+1}\subset\mathring M_n'$. Extend $X^n$ to $M_n'$ by setting
\begin{equation}\label{eq:Mn'}
	X^n|_{\Omega \cap M_{n+1}\setminus M_n} = X|_{\Omega \cap M_{n+1}\setminus M_n}.
\end{equation} 
Lemma \ref{lemma::ind} applied to
$M_n'$,  $M_{n+1}$, $\Lambda\cap M_n '$, $X^n \colon M_n ' \to \C^3$, $\varepsilon_{n}/2$, and $k$
	provides a holomorphic null curve $Y = (Y_1 , Y_2, Y_3) \colon M_{n+1}\to \C^3$ such that: 
	\begin{enumerate}[label=(\Alph*)]
		\item \label{auxa} $\| Y-X^n \|_{r, M_n}<\varepsilon_{n}/2$.
		\item \label{auxi} $Y-X^n$ vanishes to order $k$ at every point of $\Lambda\cap M_{n+1}\subset M_n'$.
		\item \label{aux3} $Y_3^{-1}(0) =(X_3^n)^{-1}(0)=  X_3^{-1} (0) \cap M_n' = X_3^{-1} (0) \cap M_{n+1}\subset \Lambda\cap M_{n+1}$.

		\newcounter{Mayus1}\setcounter{Mayus1}{\value{enumi}}
	\end{enumerate}
For \ref{aux3} take into account \eqref{eq:Lambda}, \ref{seqX3}, and \eqref{eq:Mn'}. Furthermore, we may assume by the general position result in \cite[Theorem 3.4.1 (a)]{AlarconForstnericLopez2021Book} and Hurwitz theorem that 
	\begin{enumerate}[label=(\Alph*)]
		\setcounter{enumi}{\value{Mayus1}}
		\item \label{auxap} $ \mathfrak{m} (Y) > 0 $ on $b M_{n+1}$.
		\setcounter{Mayus1}{\value{enumi}}
	\end{enumerate}
By \ref{auxap} and compactness of $bM_{n+1}$ there are a number $s >0$ and a smoothly bounded compact domain $\Rcal\subset M$ such that $M_n' \Subset \Rcal \subset \mathring M_{n+1}$ and
	$\mathfrak{m} (Y) > s$  on $M_{n+1} \setminus \mathring \Rcal$.
	Fix a number $0<\delta<\varepsilon_{n}/2$ to be specified later. 
By Lemma \ref{lemma::ap} applied to
$Y\colon M_{n+1} \to \C^3$, $\Rcal\subset \mathring M_{n+1}$, $\Lambda \cap M_{n+1}\subset\mathring M_{n+1}$, $k$, $s$, $\delta$ and 
and a large enough number $\hat s>s$,  and \cite[Theorem 3.4.1 (a)]{AlarconForstnericLopez2021Book},
we obtain a nonflat holomorphic null embedding $X^{n+1} \colon M_{n+1} \to \C^3$ such that:
	\begin{enumerate}[label=(\Alph*)]
		\setcounter{enumi}{\value{Mayus1}}
		\item \label{auxxa} $\|X^{n+1} - Y \|_{r, M_n} < \delta <\varepsilon_{n}/2$.
		\item \label{indap} $\mathfrak{m}(X^{n+1})  > (n+1) \, (\| Y_3 \|_{0, b M_{n+1}}+\delta+1)$ on $b M_{n+1}$.
		\item \label{indX3} $\| X^{n+1}_3 - Y_3\|_{0, M_{n+1}} < \delta$.
		\item \label{auxxi} $X^{n+1} - Y$ vanishes to order $k$ everywhere on $\Lambda \cap M_{n+1}$.
		\setcounter{Mayus1}{\value{enumi}}
	\end{enumerate}
Choose a number $\varepsilon_{n+1}>0$ so small that (\hyperref[seqep]{\rm e$_{n+1}$}) holds.
Condition \hyperref[seqa]{\rm (a$_{n+1}$)} follows from \ref{auxa} and \ref{auxxa}. Condition \hyperref[seqa]{\rm (b$_{n+1}$)} from \eqref{eq::condkn}, \eqref{eq:Mn'}, \ref{auxi}, \ref{auxxi}, and \ref{seqi}.
	  By \ref{aux3}, to check \hyperref[seqX3]{\rm (c$_{n+1}$)}, it suffices to see that $(X_3^{n+1})^{-1}(0)= Y_3^{-1} (0)$. Indeed, \ref{aux3} and  \ref{auxxi} ensure that $Y_3^{-1} (0) \subset (X_3^{n+1})^{-1}(0) $.
	 This, \ref{auxxi},  \eqref{eq::condk}, and \eqref{eq::condkn} show that the zero divisor of $X_3^{n+1}$ is greater than or equal to that of $Y_3$ on $M_{n+1}$, whose support lies in $\Lambda\cap M_{n+1}\subset \mathring M_{n+1}$. Choosing $\delta>0$ sufficiently small, \ref{indX3} and Hurwitz theorem imply that they actually have the same divisor; in particular $(X_3^{n+1})^{-1}(0)=Y_3^{-1}(0)$ as claimed. This proves \hyperref[seqX3]{\rm (c$_{n+1}$)}.
Finally, $\| Y_3 \|_{0, b M_{n+1}} + \delta > \| X_3^{n+1}  \|_{0, b M_{n+1}} $ by \ref{indX3}, hence \ref{indap} implies \hyperref[seqap]{\rm (d$_{n+1}$)}. This closes the induction and completes the proof.
\end{proof}

%
%
\section{A Carleman theorem for complete null curves in $\C^2\times\C^*$}\label{sec:Carleman}

\noindent This section is devoted to prove 
a Carleman type theorem on better than uniform approximation for null curves in $\C^2\times\C^*$ or, more generally, with control of the zero set of the third component function. 
To state the result we need some preparations.

Let $S$ be a closed subset in an open Riemann surface $M$. The \emph{holomorphic hull
		of $S$} is the union $\wh S = \bigcup_{n \in \n} \wh S_n$, where $\{S_n\}_{n \in \n}$ is any exhaustion of $S$ by compact subsets and
	$\wh S_j$ denotes the holomorphic convex hull of $S_j$, that is,  the union of $S_j$ and its holes.
	The hull $\wh S$ is independent of the choice of exhaustion. 
	The closed set $S \subset M$ has \emph{bounded exhaustion hulls} if for every compact set 
	$K \subset M$, the set $ \wh{K \cup S} \setminus K \cup S$ is relatively compact in $M$.
A closed set $S \subset M$ is said to be \emph{Carleman admissible} if $\wh S = S$, $S$ has bounded exhaustion hulls, and $S = K \cup \Gamma$ where $K$ is the union of a locally finite pairwise disjoint collection of compact domains with piecewise smooth boundaries and $\Gamma = \overline{S \setminus K}$ is the union of a locally finite pairwise disjoint collection of smooth Jordan arcs, so that each component of $\Gamma$ intersects $bK$ only at its endpoints (if at all) and all such intersections are transverse (see \cite[Def.\ 2.1]{Castro-InfantesChenoweth2020} or \cite[\textsection3.8]{AlarconForstnericLopez2021Book}). 
A compact set $S \subset M$ is a Carleman admissible set if and only if it is a Runge admissible set in the sense of Definition \ref{def:admissible}. 
The notion of \emph{generalized null curve $S\to\C^n$ on a Carleman admissible set $S$} extends naturally that of generalized null curves on admissible sets in Definition \ref{def:gnc}, just replacing admissible by Carleman admissible.
In particular, if $(X, f \theta)$ is a generalized null curve $S\to\C^n$ on a Carleman admissible set $S\subset M$, then for any compact smoothly bounded domain $M'$ in $M$, the restriction of $(X, f \theta)$ to the admissible set $S' = S \cap M'$ is a generalized null curve $S' \to \C^n$.

The following is the main result of this section.
\begin{theorem}\label{th::Carleman} 
	Let  $M$ be an open Riemann surface, $\theta$ be a nowhere vanishing holomorphic $1$-form on $M$, $S = K \cup \Gamma$ be a Carleman admissible subset, and $\Lambda \subset M$ be a closed discrete subset.
	Also let $(X=(X_1,X_2,X_3),f \theta)$ be a generalized null curve $S \to \C^3$ of class $\Ascr^1(S)$ and a holomorphic null curve on a neighborhood of $\Lambda$, and assume that
	\begin{equation}\label{eq::cLambda}
	X_3^{-1} (0) \subset \Lambda. 
		\end{equation}
	Given a map $k\colon \Lambda \to \n$ and a continuous function $\varepsilon \colon S \to (0, +\infty)$, there is a complete holomorphic null curve  $\wt X=(\wt X_1,\wt X_2,\wt X_3) \colon M \to \C^3$ such that:
	\begin{enumerate}[label=\rm (\alph*)]
		\item  \label{cconda}$| \wt X (p)- X(p) | < \varepsilon(p)$ for all $p \in S$. 
		\item  \label{ccondi} $\wt X - X$ vanishes to order $k (p)$ at every point $p \in \Lambda$.
		\item  \label{ccondX3}$\wt X_3^{-1}(0)= X_3^{-1} (0) \subset \Lambda$. 
	\end{enumerate}
	Furthermore, if $X|_{\Lambda}  \colon \Lambda \to \C^3 $ is injective, then there is an injective holomorphic null curve $\wt X$ with the above properties.
\end{theorem}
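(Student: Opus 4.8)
The plan is to derive Theorem \ref{th::Carleman} from the Runge--Mergelyan Theorem \ref{th::Runge} (equivalently, from its compact building block Lemma \ref{lemma::ind} combined with the completeness lemma \ref{lemma::ap}) by the recursive patching scheme standard in Carleman theory; compare \cite{Castro-InfantesChenoweth2020}. First I would fix a base point $p_0\in\mathring K\subset S$ and exhaust $M$ by connected, smoothly bounded, Runge compact domains $M_1\Subset M_2\Subset\cdots$ with $p_0\in\mathring M_1$, chosen so that each $bM_n$ meets $S=K\cup\Gamma$ transversally and avoids $\Lambda$. The hypothesis that $S$ has bounded exhaustion hulls is precisely what allows me, after enlarging each $M_n$ if needed, to guarantee that the compact admissible set $T_n:=M_n\cup\big(S\cap M_{n+1}\big)$ is Runge in $M$ for every $n$; this is the set on which the inductive step operates, and its $\Oscr(M)$-convexity is what makes Lemma \ref{lemma::ind} applicable.

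Next I would construct, by induction on $n$, holomorphic null curves $X^n$ defined near $T_n$ together with a rapidly decreasing sequence of tolerances, arranged so that $X^n$ is $\Cscr^1$-close to $X^{n-1}$ on $M_{n-1}$ (forcing $\Cscr^1$-convergence to a holomorphic null curve $\wt X:=\lim_n X^n$ on $M$), is $\Cscr^0$-close to $X$ on $S\cap M_n$, interpolates the prescribed $k(p)$-jet of $X$ at each $p\in\Lambda\cap M_n$, and satisfies $(X^n_3)^{-1}(0)=X_3^{-1}(0)\cap M_n$. The inductive step feeds into Lemma \ref{lemma::ind} the generalized null curve obtained by gluing $X^n$ on $M_n$ to tangential data prescribed along the new arc segments $\Gamma\cap(M_{n+1}\setminus M_n)$; condition \ref{ccondX3} survives because the jet interpolation pins the zeros of the third coordinate inside $\Lambda$ and Hurwitz's theorem, applied on the compact $M_{n+1}$ (using the $\Cscr^0$-control of the third coordinate from \ref{L4} and the interpolation \ref{L5}), excludes spurious zeros away from $\Lambda$. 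For the Carleman estimate \ref{cconda} the bookkeeping is to split the budget pointwise: since $\varepsilon$ is continuous and positive it has a positive minimum $\varepsilon_n$ on the compact annulus $S\cap(M_n\setminus\mathring M_{n-1})$, and by requiring that $X^n$ lie within $\varepsilon_n/2$ of $X$ there and that all later corrections sum to at most $\varepsilon_n/2$ on that annulus, the limit obeys $|\wt X-X|<\varepsilon$ at every point of $S$.

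Completeness I would build in through two devices interleaved in the recursion. For divergent paths exiting through the domain boundaries I would apply Lemma \ref{lemma::ap} at each stage to raise $\mathfrak m(X^n)$ near $bM_n$, arranging $\dist_{X^n}(p_0,bM_n)>n$; because \ref{L4} controls the third coordinate and \ref{L5} fixes the jets, this is compatible with \ref{ccondX3}, and performing it with support essentially off $S\cap M_n$ keeps it compatible with \ref{cconda}. For divergent paths that run off to infinity along an unbounded component of $\Gamma$ I would exploit the generalized null curve formalism of Definition \ref{def:gnc}: along each new arc segment I prescribe the tangential datum $f\theta$ with large integrated length while keeping the primitive $X=\int f\theta$ within the $\Cscr^0$ budget of the target, so that the holomorphic null curve produced by Lemma \ref{lemma::ind} has large derivative, and hence infinite length, along such arcs even though it only has to match $X$ in the $\Cscr^0$ sense there. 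Injectivity, when $X|_\Lambda$ is injective, I would maintain by inserting at each stage the general position argument of \cite[Theorem 3.4.1 (a)]{AlarconForstnericLopez2021Book} to make $X^n$ an embedding of $M_n$ and taking the tolerances small enough that the $\Cscr^1$-limit stays injective.

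I expect the principal obstacle to be this reconciliation of completeness with the purely $\Cscr^0$ Carleman bound along the non-compact arc part $\Gamma$: one must pump length into $\wt X$ along arcs escaping to infinity, using only the freedom in the tangential data $f\theta$, without either violating \ref{cconda} or perturbing the exact zero set \ref{ccondX3} of the third coordinate; carrying out this length-pumping inside the inductive step so that all three requirements hold simultaneously in the limit is the delicate point. The combinatorics of the bounded exhaustion hulls needed to keep every $T_n$ Runge is the remaining point requiring attention, but it is routine given the hypothesis.
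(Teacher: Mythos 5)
Your recursive scheme---exhaustion by Runge compact domains adapted to $S$, gluing $X^n$ to the data along the newly appearing arc segments, Lemma \ref{lemma::ind} on the sets $T_n=M_n\cup(S\cap M_{n+1})$, Hurwitz's theorem for the zero set, pointwise budget splitting for the Carleman estimate, and general position for injectivity---is essentially the paper's proof; your $T_n$ are exactly the paper's sets $S_n'$, produced there via an exhaustion lemma of Chenoweth which makes each $S\cup M_n$ Carleman admissible. The genuine gap is in your completeness mechanism. Lemma \ref{lemma::ap} cannot be used here: its conclusion \ref{L2} forces $\mathfrak{m}(\wh X)>\hat s$ on the \emph{whole} boundary $bM_n$, while arcs of $\Gamma$ cross $bM_n$ (the exhaustion is chosen with $bM_n\cap S\subset\Gamma$, finitely many transverse intersections), and at those crossing points condition \ref{cconda} forces $\mathfrak{m}(\wt X)\le|\wt X|<|X|+1$ to stay bounded independently of $n$; for $\hat s$ large this is a contradiction. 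There is no ``supported off $S$'' version of that lemma, nor could its hypothesis $\mathfrak{m}(X)>s$ on $\overline M\setminus\mathring K$ be arranged compatibly with the Carleman bound. This incompatibility between extrinsic growth on $bM_n$ and approximation on the unbounded set $S$ is precisely why Theorem \ref{th::Carleman} asserts completeness rather than the almost properness of Theorem \ref{th::Runge}, and why the paper notes that the $\Cscr^0$ approximation on $S$ is optimal.

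Your second device does not repair this. Pumping length into $\wt X$ along the arcs of $\Gamma$ only controls paths that run along $\Gamma$, whereas completeness requires a lower bound on the length of \emph{every} divergent path in $M$, and such paths may avoid $\Gamma$ entirely; moreover the dichotomy you set up is illusory, since every divergent path eventually crosses each $bM_n$, so a single device bounding $\dist_{\wt X}(p_0,bM_n)$ from below suffices---provided it is compatible with the Carleman bound. What the paper uses, and what your argument is missing, is the Jorge--Xavier type intrinsic diameter enlargement lemma \cite[Lemma 7.3.1]{AlarconForstnericLopez2021Book}: applied at each stage to $X^{n+1}$ on the compact domain $M_{n+1}$, it yields $\dist_{X^{n+1}}(p_0,bM_{n+1})>n+1$ while keeping $\Cscr^0$-approximation on all of $M_{n+1}$ (in particular on $S\cap M_{n+1}$), hence it respects the Carleman budget and, via Hurwitz's theorem and jet interpolation, also conditions \ref{ccondi} and \ref{ccondX3}; the stability built into the choice of $\varepsilon_{n+1}$ (if $\|Y-X^{n+1}\|_{1,M_{n+1}}<2\varepsilon_{n+1}$ then $\dist_Y(p_0,bM_{n+1})>n+1$) then passes this lower bound to the limit, giving completeness for all divergent paths at once. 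With this single replacement your proof coincides with the paper's.
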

Except for property \ref{ccondX3}, Theorem \ref{th::Carleman} is proved in \cite{Castro-InfantesChenoweth2020} (see also \cite[\textsection3.8]{AlarconForstnericLopez2021Book}),  where a Carleman type approximation result with interpolation for more general families of directed holomorphic immersions of open Riemann surfaces is shown.
	Note that the Carleman type approximation in Theorem \ref{th::Carleman} is optimal in the sense that if the original holomorphic null curve is not complete on $S$, then the approximation by complete solutions may be done in the fine $\Cscr^0 (S)$ topology, but not even in the $\Cscr^r (S)$ topology provided that $(X,f\theta)$ is of class $\Ascr^r(S)$ for some $r\ge 1$. Nevertheless, if we do not insist on the completeness condition, a modification of the proof leads to approximation in the fine $\Cscr^r(S)$ topology in that case.
\begin{proof}
We may assume that $K\neq\varnothing$, $\Lambda\subset\mathring S$, and $\varepsilon<1$ everywhere on $S$. As in the proof of Theorem \ref{th::Runge}, we assume that $X|_{\Lambda}\colon\Lambda\to\C^3$ is injective and that \eqref{eq::condk} holds.
Choose a connected component $M_0$ of $K\subset S$ and fix $p_0\in\mathring M_0$. By \cite[Lemma 3]{Chenoweth2018} (see also \cite[proof of Theorem 3.8.6]{AlarconForstnericLopez2021Book}), we can exhaust $M$ by connected, smoothly bounded, Runge compact domains $ M_1 \Subset M_2 \Subset \cdots \Subset \bigcup_{n \in \n} M_n = M$ such that $M_0 \subset\mathring  M_1$,  $(b M_n \cap S )\subset \Gamma \setminus K$, the intersection $b M_n \cap \Gamma$ is transverse and finite and the closed set  $S\cup M_n$ is Carleman admissible for all $n \in \N$.
We also make sure that the compact sets
\begin{equation}\label{eq:Sn}
	S_n:=S\cap M_{n+1}\quad\text{and}\quad S_n':=M_n\cup S_n
\end{equation}
are Runge admissible (Definition \ref{def:gnc}), and $\Lambda \cap (b S_n\cup bM_n) = \varnothing$ for all $n \ge 0$.
In particular, $M_0 =: S_{-1}  \Subset S_0 \Subset S_1 \Subset S_2 \Subset \cdots \Subset \bigcup_{n \in \n} S_n = S$ is an exhaustion of $S$.	
Set $X^0 = X|_{M_0}\colon M_0\to\C^3$. We shall inductively construct a sequence of numbers $\varepsilon_n>0$ and holomorphic null embeddings $X^n=(X_1^n,X_2^n,X_3^n) \colon M_n \to \C^3$ of class $\Ascr^1(M_n)$ satisfying the following properties for all $n \ge 1$.
\begin{enumerate}[label=(\alph*$_n$)]
		\item \label{cseqac}$|X^n (p) - X (p)|< \varepsilon (p)/2 $ for all $p \in S_{n-1} $.
		\item \label{cseqau}$|| X^n - X^{n-1} ||_{ 0, M_{n-1}} < \varepsilon_{n-1}$.
		\item \label{cseqi}	$X^n - X$ vanishes to order $k(p)$ at every point $p \in\Lambda\cap M_n\subset\mathring S_{n-1}$.
		\item \label{cseqX3}$(X^n_3)^{-1} (0) = X_3^{-1} (0)\cap M_n \subset \Lambda\cap M_n$.
       \item \label{cseqep} 
       $0<	\varepsilon_n < \varepsilon_{n-1}/2$
       and  if $Y \colon M \to \C^3$ is a holomorphic map with $ \| Y - X^n \|_{1, M_n} < 2 \varepsilon_n$, then $Y|_{ M_n}$ is an embedding and $\dist_{Y} (p_0,   b M_n) >n$.
	\end{enumerate}
Given such a sequence satisfying \ref{cseqac}, \ref{cseqau}, and \ref{cseqep} for all $n \ge 1$, the limit map \(
\wt X=\lim_{n\to\infty} X^n\colon M \to\c^3
\) is a complete holomorphic injective immersion which satisfies condition \ref{cconda} in the statement.
If the sequence also meets \ref{cseqi} and \ref{cseqX3} for all $n \ge 1$, then $\wt X$ also satisfies  \ref{ccondi} and \ref{ccondX3}, which follow  as in the proof of Theorem \ref{th::Runge} using also \eqref{eq::cLambda}.
To explain the induction,  note that $X_0 \colon M_0 \to \C^3$ meets  (\hyperref[cseqac]{\rm a$_0$}) ,(\hyperref[cseqi]{\rm c$_0$}), and (\hyperref[cseqX3]{\rm d$_0$}). 
Fix  $n \ge 0$, assume that we have  
a holomorphic null curve $X^n=(X_1^n,X_2^n,X_3^n) \colon M_n \to \C^3$ of class $\Ascr^1(M_n)$ satisfying \ref{cseqac}, \ref{cseqi}, and \ref{cseqX3}, and let us find $\varepsilon_{n+1}>0$ and a holomorphic null embedding $X^{n+1}= (X_1^{n+1}, X_2^{n+1}, X_3^{n+1})$ of class $\Ascr^1 ( M_{n+1})$ enjoying (\hyperref[cseqac]{\rm a$_{n+1}$})--(\hyperref[cseqep]{\rm e$_{n+1}$}).  
For this, note that the closure of $S_n'\setminus M_n$ intersects the smoothly bounded compact domain $M_n$ at a finite set, where some arcs of the Runge admissible set $S_n'$ are attached.
Thus, by \ref{cseqac}, we may apply the gluing lemma in \cite[Lemma 3.1]{Castro-InfantesChenoweth2020} in order to extend $X^n$ to a generalized holomorphic null curve $(X^n, f_n\theta)$ on $S_n'$ such that:
\begin{enumerate}[label=(\Alph*)]
	\item \label{gac} $| X^n(p)-X(p) | <  \varepsilon (p) /2$  for all $p\in S_n$.
	\item \label{gout} $X^n-X$ vanishes to order $k(p)$ at every point $p \in\Lambda\cap M_{n+1}\subset\mathring S_n'$.
	\newcounter{Mayus}\setcounter{Mayus}{\value{enumi}}
\end{enumerate}
The idea for this is to deform $X|_{S_n'\setminus M_n}$ very slightly in a small neighborhood of $S_n\cap bM_n$ in order to glue it with $X^n$. 
Lemma \ref{lemma::ind} applied to $S_n' \Subset M_{n+2}$, $\Lambda\cap M_{n+1}$, $X^n \colon S_n' \to \C^3$, and sufficiently large $k\in\N$ and small $\delta>0$ gives a holomorphic null curve $X^{n+1}\colon M_{n+2}\to\c^3$ satisfying (\hyperref[cseqac]{\rm a$_{n+1}$})--(\hyperref[cseqX3]{\rm d$_{n+1}$}); take into account \ref{gac} and \ref{gout}. Furthermore, by first applying \cite[Lemma 7.3.1]{AlarconForstnericLopez2021Book} to enlarge the intrinsic diameter of $X^{n+1}$, then the general position result in \cite[Theorem 3.4.1 (a)]{AlarconForstnericLopez2021Book}, and with the help of Hurwitz theorem, we can assume in addition that $X^{n+1}$ is an embedding and   $\dist_{X^{n+1}} (p_0 , bM_{n+1})>  n+1$. So, to close the induction we just choose a number $\varepsilon_{n+1}>0$ so small that (\hyperref[cseqX3]{\rm e$_{n+1}$}) holds.
This concludes the proof.
\end{proof}

%
%
	
\section{From holomorphic null curves in $\C^2\times\C^*$ to\\holomorphic null curves in $SL_2(\C)$ and CMC-1 surfaces in $\h^3$ and $\S^3_1$}\label{sec:SL2C}
	
\noindent A \emph{holomorphic null curve} in the complex special linear group
	\[
	SL_2 (\C)  = \left\{    z =  \left( \begin{array}{cc}
			z_{11} & z_{12}\\
			z_{21}& z_{22} 
		\end{array} \right) \in \C^4 :  \det z = z_{11} z_{22} - z_{12} z_{21}  = 1 \right\}
	\]
	is a holomorphic immersion $F \colon M \to SL_2 (\C)$ of an open Riemann surface $M$ 
	which is directed by the quadric variety 
	\[
	\mathcal{A} = \left\{  z =  \left( \begin{array}{cc}
			z_{11} & z_{12}\\
			z_{21}& z_{22} 
		\end{array} \right) \in \C^4 :  \det z = z_{11} z_{22} - z_{12} z_{21}  = 0   \right\};
	\]
	see Bryant \cite[Eq. (2.3)]{Bryant1987Asterisque}.
As above, to be directed by $\mathcal A$ means that the derivative $F' \colon M \to \C^4$ with respect to any local holomorphic coordinate on $M$ has range in $\mathcal A \setminus \{ 0\}$. The term {\em null} refers to the fact that tangent vectors to $F(M)$ are null (or lightlike) for the Killing metric in $SL_2 (\C)$; see \cite[\textsection2.6]{MR1834454} or \cite[\textsection 11.5]{JensenMussoNicolodi2016}. As pointed out by Mart\'in, Umehara, and Yamada in \cite[\textsection3.1]{MartinUmeharaYamada2009CVPDE}, the biholomorphism $\Tcal \colon \c^2 \times \c^* \to SL_2 ( \c) \setminus \{ z_{11} = 0\}$,
		\begin{equation}\label{eq::defT}
				\Tcal(z_1, z_2 , z_3) =   \frac{1}{z_3}\left( \begin{array}{cc}
						1& z_1 +\igot z_2\\
						z_1 - \igot z_2& z_1^2 + z_2^2 +z_3^2
					\end{array} \right),\quad (z_1,z_2,z_3)\in \C^2\times\C^*,
			\end{equation}
		takes holomorphic null curves in $\c^2 \times \c^*$ into holomorphic null curves in $SL_2 ( \c) \setminus \{ z_{11} = 0\}$. The inverse biholomorphic map $\Tcal^{-1}  \colon  SL_2 ( \c) \setminus \{ z_{11} = 0\} \to \c^2 \times \c^*$ also takes holomorphic null curves into holomorphic null curves.
		 Using this correspondence we obtain the following corollary of the proof of Theorem \ref{th::Runge}.
		%
		%
		%
		\begin{corollary}\label{co:SL2C}
			Let $M$ be an open Riemann surface, $\Lambda\subset M$ be a closed discrete set, and $F \colon U \to SL_2 (\C)  \setminus \{ z_{11} = 0\}$ be a holomorphic null curve on an open neighborhood $U \subset M$ of $\Lambda$. For any map $k \colon \Lambda \to \N$ there is a holomorphic null curve 
			\[
			\wt F = \left( \begin{array}{cc}
			\wt F_{11} & \wt F_{12}\\
			\wt F_{21}& \wt F_{22} 
		\end{array} \right)  \colon M \to SL_2 (\C) \setminus \{ z_{11} = 0\}
		\] such that:
			\begin{enumerate}[label= \rm (\roman*)]
				\item \label{condsl2i} $\wt F - F\colon U\to \C^4$ vanishes to order $k(p)$ at every point $p \in \Lambda$. 
				\item \label{condsl2ap} $(\wt F_{12}, \wt F_{21} ) \colon M \to \C^2$ is an almost proper map. In particular, the holomorphic null curve $\wt F\colon M\to SL_2(\C)\subset\C^4$ is almost proper, and hence complete.
			\end{enumerate}
			Furthermore, if $F|_{\Lambda}$ is injective then $\wt F$ can be chosen to be injective.
		\end{corollary}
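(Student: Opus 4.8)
The plan is to transfer Theorem \ref{th::Runge} across the biholomorphism $\Tcal$ defined in \eqref{eq::defT}. Given the holomorphic null curve $F\colon U\to SL_2(\C)\setminus\{z_{11}=0\}$ on a neighborhood of $\Lambda$, I would first set $X := \Tcal^{-1}\circ F\colon U\to\C^2\times\C^*$, which is a holomorphic null curve by the correspondence stated in the text. Write $X=(X_1,X_2,X_3)$; the crucial point is that $X$ automatically has range in $\C^2\times\C^*$, i.e.\ $X_3$ is nowhere zero on $U$, because $F$ avoids $\{z_{11}=0\}$ and the $z_3$-coordinate is (the reciprocal of) the $(1,1)$-entry. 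Thus the hypothesis $X_3^{-1}(0)\subset\Lambda$ of Theorem \ref{th::Runge} holds vacuously with $X_3^{-1}(0)=\varnothing$. I would then apply Theorem \ref{th::Runge} with $S$ a suitable Runge admissible neighborhood-datum built from $\Lambda$ (taking $S=K\cup\Lambda$ with $K$ a small compact piece of $U$, or simply shrinking to a generalized null curve datum supported near $\Lambda$), the given interpolation map $k$, and any $\varepsilon>0$.

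Theorem \ref{th::Runge} produces a holomorphic null curve $\wt X=(\wt X_1,\wt X_2,\wt X_3)\colon M\to\C^3$ with $\wt X-X$ vanishing to order $k(p)$ at each $p\in\Lambda$ (property \ref{rcondb}), with $\wt X_3^{-1}(0)=X_3^{-1}(0)=\varnothing$ (property \ref{rcondc}), and with $(\wt X_1,\wt X_2)\colon M\to\C^2$ almost proper (property \ref{rcondd}); moreover $\wt X$ may be taken injective when $X|_\Lambda$ is injective. Since $\wt X_3$ vanishes nowhere, $\wt X$ maps $M$ into $\C^2\times\C^*$, so I may define $\wt F := \Tcal\circ\wt X\colon M\to SL_2(\C)\setminus\{z_{11}=0\}$, again a holomorphic null curve by the stated transfer property of $\Tcal$. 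Reading off the entries of \eqref{eq::defT}, the off-diagonal entries are $\wt F_{12}=(\wt X_1+\igot\wt X_2)/\wt X_3$ and $\wt F_{21}=(\wt X_1-\igot\wt X_2)/\wt X_3$, while $\wt F_{11}=1/\wt X_3$. Property \ref{condsl2i} (jet interpolation of $\wt F$ to $F$ at $\Lambda$) follows because $\Tcal$ is biholomorphic and agreement of $\wt X$ and $X$ to order $k(p)$ is preserved under composition with a biholomorphism.

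The one genuinely substantive point is the almost-properness assertion \ref{condsl2ap} for $(\wt F_{12},\wt F_{21})$, and this is exactly where the sharper conclusion \ref{rcondd} of Theorem \ref{th::Runge} is needed rather than merely the almost-properness of $(\wt X_1,\wt X_2)$. Indeed $(\wt F_{12},\wt F_{21})$ is obtained from $(1,\wt X_1,\wt X_2)/\wt X_3$ by a fixed invertible linear change of the last two coordinates, so its components form an almost proper map into $\C^2$ precisely because Theorem \ref{th::Runge}\,\ref{rcondd} asserts that $(1,\wt X_1,\wt X_2)/\wt X_3\colon M\setminus X_3^{-1}(0)\to\C^3$ is almost proper; here $X_3^{-1}(0)=\varnothing$, so the domain is all of $M$. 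I would verify that a linear isomorphism of the target preserves almost-properness (connected components of preimages of compacta remain compact, since preimages of compacta are unchanged up to the fixed homeomorphism), and then conclude that $(\wt F_{12},\wt F_{21})$ is almost proper, whence $\wt F\colon M\to SL_2(\C)\subset\C^4$ is almost proper and therefore complete, as recorded after Theorem \ref{th::Runge}. The expected main obstacle is purely bookkeeping: confirming that the off-diagonal block $(\wt F_{12},\wt F_{21})$, rather than the full matrix, already forces almost-properness, which is why invoking the precise form of \ref{rcondd} is essential and why a bare application of Theorem \ref{th:intro-C2xC*} would not suffice.
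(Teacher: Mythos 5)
Your overall strategy---pass to $\C^2\times\C^*$ via $X=\Tcal^{-1}\circ F$, interpolate there, and return with $\Tcal$---is exactly the paper's, and your treatment of \ref{condsl2i} and of injectivity is fine. But the step you yourself single out as the crux, deducing \ref{condsl2ap} from the \emph{statement} of Theorem \ref{th::Runge}\,\ref{rcondd}, has a genuine gap. Property \ref{rcondd} asserts almost properness of the three-component map $(1,\wt X_1,\wt X_2)/\wt X_3\colon M\to\C^3$. To obtain $(\wt F_{12},\wt F_{21})=\bigl((\wt X_1+\igot \wt X_2)/\wt X_3,\,(\wt X_1-\igot \wt X_2)/\wt X_3\bigr)$ you must not only apply the invertible linear map $(w_1,w_2)\mapsto(w_1+\igot w_2,w_1-\igot w_2)$ (which indeed preserves almost properness) but also \emph{discard the first component} $1/\wt X_3=\wt F_{11}$. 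Discarding a component is post-composition with a non-injective linear projection, and almost properness is not preserved under projections: the preimage of a compact set $K'\subset\C^2$ under $(\wt F_{12},\wt F_{21})$ equals the preimage under the three-component map of the \emph{noncompact} set $\C\times A$ (with $A$ compact), about which \ref{rcondd} says nothing. Concretely, $z\mapsto(z,0,0)$ is a proper map $\C\to\C^3$ whose last two components form a constant, hence not almost proper, map. Nor does combining both halves of \ref{rcondd} rescue the argument by soft means: a noncompact connected component $C$ of $(\wt F_{12},\wt F_{21})^{-1}(K')$ decomposes as $\bigl(C\cap\{|\wt X_3|\le 1\}\bigr)\cup\bigl(C\cap\{|\wt X_3|\ge 1\}\bigr)$, and the two halves of \ref{rcondd} show that each of these closed pieces has all its connected components compact, but a connected noncompact set can perfectly well be such a union (think of $[0,\infty)$ as an alternating chain of unit intervals), so no contradiction results.

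This is precisely why the paper does not invoke Theorem \ref{th::Runge} as a black box but appeals to ``an inspection of the proof'': the induction there yields the quantitative boundary estimate \eqref{eq::normt}, namely $|(\wt X_1,\wt X_2)/\wt X_3|>n-1$ on $bM_n$ for an exhaustion $M_1\Subset M_2\Subset\cdots$ as in \eqref{eq:Mncup}, and this estimate controls exactly the two components you need. Combined with the identity $|\wt F_{12}|^2+|\wt F_{21}|^2=2\bigl(|\wt X_1|^2+|\wt X_2|^2\bigr)/|\wt X_3|^2$ read off from \eqref{eq::defT}, it gives $|(\wt F_{12},\wt F_{21})|>\sqrt{2}\,(n-1)$ on $bM_n$, so the preimage of any compact subset of $\C^2$ misses $bM_n$ for all large $n$, every component of that preimage is trapped in some $M_n$, and \ref{condsl2ap} follows. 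To repair your write-up, replace the citation of \ref{rcondd} by this interior estimate (equivalently, strengthen the conclusion of Theorem \ref{th::Runge} to say that $(\wt X_1,\wt X_2)/\wt X_3\colon M\setminus X_3^{-1}(0)\to\C^2$ is almost proper, which is what its proof actually establishes). A minor further point: your proposed $S=K\cup\Lambda$ is not an admissible set when $\Lambda$ is infinite, since admissible sets are compact by Definition \ref{def:admissible}; the paper simply takes $S=\varnothing$, which that definition allows, and lets the interpolation condition at $\Lambda$ do all the work.
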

		\begin{proof}
			We assume that $F|_{\Lambda}$ is injective, otherwise the proof is simpler. Set $X=(X_1,X_2,X_3):= \Tcal^{-1} \circ F    \colon U \to \C^2 \times \C^*$, where $\Tcal$ is given in \eqref{eq::defT}. Note that $X|_{\Lambda} \colon \Lambda \to \C^3$  is injective. 
Choose an exhaustion $M_1\Subset M_2\Subset\cdots $ of $M$ as in \eqref{eq:Mncup}.			
			 Since $X_3^{-1}(0)=\varnothing$, an inspection of the proof of Theorem \ref{th::Runge} for the data
			$
			S= \varnothing$, $\Lambda \subset M$, $X \colon U \to \C^3$, and $k \colon \Lambda  \to \N$
			gives an injective holomorphic null curve $\wt X = (\wt X_1, \wt X_2 , \wt X_3) \colon M \to \C^2\times\C^*$ such that: 
			\begin{enumerate}[label= \rm (\alph*)]
				\item \label{sl2int}$ \wt X - X$ vanishes to order $k(p)$ at every point $p \in \Lambda$. 
				\item \label{sl2ap}$| (\wt X_1, \wt X_2)/ \wt X_3| >n-1$ on $bM_n$ for all $n\in\N$; see \eqref{eq::normt}.
			\end{enumerate}
			 The injective holomorphic null curve $\wt F := \Tcal \circ \wt X \colon M \to SL_2 ( \C) \setminus \{ z_{11}= 0\}$ then satisfies the conclusion of the corollary.  Indeed, condition \ref{condsl2i} follows from the biholomorphicity of $\Tcal$ and \ref{sl2int}.
			  On the other hand, by \eqref{eq::defT} we have that
			 $
	 | \wt F_{12} |^2 + | \wt F_{21} |^2 =  2 ( | \wt X_1|^2 + | \wt X_2 |^2)/| \wt X_3|^2$. This, \ref{sl2ap}, and \eqref{eq:Mncup}, imply \ref{condsl2ap}.
		\end{proof}

Denote by $\mathbb{L}^4$ the Minkowski space of dimension $4$ with the canonical Lorentzian metric of signature $(-+++)$. 
 We identify $\mathbb{L}^4$ with the space of hermitian matrices ${\rm Her}(2)\subset M_2(\C)$ by
\[
 \mathbb{L}^4 \ni (x_0, x_1, x_2, x_3) \longleftrightarrow  \left(  \begin{array}{cc}
 	x_0 + x_3 & x_1 + \igot x_2 \\
 	x_1 - \igot x_2 & x_0- x_3
 \end{array} \right) \in {\rm Her}(2),
\]
and consider the hyperboloid model for the hyperbolic $3$-space
		\[
		\h^3 = \left\{ (x_0, x_1, x_2, x_3) \in 	\mathbb{L}^4 : - x_0^2 +  x_1^2 + x_2^2 + x_3^2 = -1, x_0  > 0  \right\}
		\]
		with metric induced by $\mathbb{L}^4$.
		Under the above identification we have that
		 \[
			\h^3 = \{ A \overline{A}^T : A \in SL_2 ( \C) \} = SL_2( \C) / SU(2),
		 \]
		 where $\overline \cdot$ and $\cdot^T$ mean complex conjugation and transpose matrix respectively.
		In this setting, the canonical projection
		\begin{equation}\label{eq::bproj}
			\pi_H \colon  SL_2 (\C) \to \h^3,\quad \pi_H (A) =  A \overline{A}^T,
		\end{equation}
			takes holomorphic null curves in $SL_2 (\C)$ to conformal $\CMC$ immersions in $\h^3$ (i.e., Bryant surfaces); see \cite{Bryant1987Asterisque}. 
			Since $SU(2)$ is compact, $\pi_H$ is a proper map, so it takes almost proper holomorphic null curves in $SL_2 (\C)$ into almost proper $\CMC$ immersions.
			Conversely, every simply connected Bryant surface in $\h^3$ lifts to a holomorphic null curve in $SL_2(\C)$.
			Finally, note that the restricted map $\pi_H |_{ SL_2 (\C) \setminus \{ z_{11} = 0\}} \colon SL_2 (\C) \setminus \{ z_{11} = 0\} \to \h^3$ is surjective. These observations lead to the following more precise version of Corollary \ref{co:intro-dense}.
%
%
\begin{corollary}\label{cor::intbryant}
Let $M$ be an open Riemann surface, $\Lambda\subset M$ be a closed discrete subset, and $\varphi \colon U \to \h^3$ be a conformal $\CMC$ immersion on an open neighborhood $U \subset M$ of $\Lambda$. For any map $k \colon \Lambda \to \N$ there is an almost proper (hence complete) conformal $\CMC$ immersion  $\wt \varphi \colon M \to \h^3$ agreeing with $\varphi$ to order $k(p)$  at every point $p \in \Lambda$. 
In particular, there is an almost proper (hence complete) conformal  $\CMC$ immersion $M\to\h^3$ with everywhere dense image.
\end{corollary}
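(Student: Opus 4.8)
The plan is to reduce everything to the Runge theorem for almost proper null curves in $SL_2(\C)\setminus\{z_{11}=0\}$ (Corollary \ref{co:SL2C}) by lifting $\varphi$, interpolating upstairs, and projecting back through the proper map $\pi_H$ of \eqref{eq::bproj}. Since the desired conclusion only constrains the jets of $\widetilde\varphi$ at the points of $\Lambda$, and $\Lambda$ is closed and discrete, I may shrink $U$ to a disjoint union $\bigsqcup_{p\in\Lambda}D_p$ of small simply connected disc neighborhoods, one around each $p\in\Lambda$. On each simply connected $D_p$ the Bryant surface $\varphi|_{D_p}$ lifts to a holomorphic null curve $F_p\colon D_p\to SL_2(\C)$ with $\pi_H\circ F_p=\varphi|_{D_p}$. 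As the first row of an $SL_2(\C)$ matrix never vanishes, I can choose a constant $R_p\in SU(2)$ so that the $(1,1)$-entry of $F_pR_p$ is nonzero at $p$; right multiplication by $SU(2)$ leaves $\pi_H\circ F_p=\varphi|_{D_p}$ unchanged. Shrinking each $D_p$ so that this entry stays nonzero, I obtain a holomorphic null curve $F\colon U'\to SL_2(\C)\setminus\{z_{11}=0\}$, with $U'=\bigsqcup_p D_p$ a neighborhood of $\Lambda$, satisfying $\pi_H\circ F=\varphi$ there.

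Next I would apply Corollary \ref{co:SL2C} to $F$ and $k$, obtaining an almost proper (hence complete) holomorphic null curve $\widetilde F\colon M\to SL_2(\C)\setminus\{z_{11}=0\}$ such that $\widetilde F-F$ vanishes to order $k(p)$ at every $p\in\Lambda$, and I would set $\widetilde\varphi:=\pi_H\circ\widetilde F\colon M\to\h^3$. Because $\pi_H$ carries holomorphic null curves to conformal $\CMC$ immersions, $\widetilde\varphi$ is such an immersion. Since $\pi_H$ is proper ($SU(2)$ being compact), for any compact $K\subset\h^3$ the set $\pi_H^{-1}(K)$ is compact, so the connected components of $\widetilde\varphi^{-1}(K)=\widetilde F^{-1}(\pi_H^{-1}(K))$ are all compact; thus $\widetilde\varphi$ is almost proper, hence complete. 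Finally, since $\pi_H$ is real-analytic, the $k(p)$-jet of $\pi_H\circ\widetilde F$ at $p$ is determined by the $k(p)$-jet of $\widetilde F$ at $p$ together with $\pi_H$; as $\widetilde F(p)=F(p)$ and $\widetilde F$ and $F$ share their $k(p)$-jet at $p$, the difference $\widetilde\varphi-\varphi=\pi_H\circ\widetilde F-\pi_H\circ F$ vanishes to order $k(p)$, i.e., $\widetilde\varphi$ agrees with $\varphi$ to order $k(p)$ at each $p\in\Lambda$. This establishes the main assertion.

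For the dense image, I would choose a dense sequence $c_1,c_2,\ldots$ in $\h^3$ and a divergent sequence $a_1,a_2,\ldots$ in $M$, set $\Lambda=\{a_j\}$ and $k\equiv 1$, and produce a local $\varphi$ with $\varphi(a_j)=c_j$. Fixing one holomorphic null curve $F_0$ on a disc and writing $H_0=\pi_H(F_0(0))\in\h^3$, the transitive isometric action $H\mapsto gH\overline{g}^T$ of $SL_2(\C)$ on $\h^3$ lets me pick $g_j\in SL_2(\C)$ with $g_jH_0\overline{g_j}^T=c_j$; then $g_jF_0$ is again a holomorphic null curve (left multiplication by the constant $g_j$ preserves the null condition), and $\varphi|_{D_j}:=\pi_H\circ(g_jF_0)$ is a conformal $\CMC$ immersion near $a_j$ with value $c_j$ at $a_j$. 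Applying the main assertion to this $\varphi$ with $k\equiv 1$ yields an almost proper $\CMC$ immersion $\widetilde\varphi\colon M\to\h^3$ with $\widetilde\varphi(a_j)=\varphi(a_j)=c_j$ for all $j$, whence $\overline{\widetilde\varphi(M)}=\h^3$.

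The one point requiring care is the insertion into $SL_2(\C)\setminus\{z_{11}=0\}$: it is only on this set that Corollary \ref{co:SL2C} supplies almost-properness (through the biholomorphism $\Tcal$ of \eqref{eq::defT} onto $\c^2\times\c^*$), so the $SU(2)$-adjustment and shrinking in the first step are essential. The two remaining transfers are routine, since jets compose under the smooth map $\pi_H$ (so the contact order survives projection) and almost-properness survives precisely because $\pi_H$ is proper.
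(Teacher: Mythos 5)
Your proof is correct and follows essentially the same route as the paper: lift $\varphi$ locally near $\Lambda$ to a holomorphic null curve into $SL_2(\C)\setminus\{z_{11}=0\}$ (using the $SU(2)$ right-multiplication freedom of the lift, which is exactly the mechanism behind the paper's appeal to surjectivity of $\pi_H|_{SL_2(\C)\setminus\{z_{11}=0\}}$), apply Corollary \ref{co:SL2C}, and push down by the proper projection $\pi_H$. Your treatment of the dense-image statement via the transitive action $H\mapsto gH\overline{g}^T$ is also the intended argument, just spelled out in more detail than the paper's.
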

Corollary \ref{cor::intbryant} proves Theorem \ref{th:intro-Bryant} in the special case when $E=\varnothing$. The general case of the theorem requires some extra work and we prove it in Section \ref{sec:bryant}.
\begin{proof}
We may assume without loss of generality that $U$ is simply connected, otherwise we restrict $\varphi$ to a simply connected domain. Since  $\pi_H |_{ SL_2 (\C) \setminus \{ z_{11} = 0\}}$ is surjective, there exists a holomorphic null curve $F \colon U \to SL_2 (\C)$ such that $\pi_H \circ F = \varphi$ and $F(\Lambda)\subset SL_2(\C)\setminus \{ z_{11} = 0\}$. Up to replacing $U$ by a smaller open neighborhood of $\Lambda$ if necessary, we may assume that $F$ has range in $SL_2(\C)\setminus \{ z_{11} = 0\}$. Corollary \ref{co:SL2C}
then furnishes us with an almost proper holomorphic null curve $\wt F \colon M \to SL_2 ( \C)\subset\C^4$ (with range in $SL_2 ( \C)\setminus \{ z_{11} = 0\}$) such that $\wt F - F$ vanishes to order $k(p)$ at every point $p \in \Lambda$. The conformal $\CMC$ immersion $\wt \varphi := \pi_H  \circ \wt F \colon M \to \h^3$ clearly satisfies the conclusion of the corollary.
		\end{proof}
	Now consider the de Sitter $3$-space
	\[
	\S_1^3 = \{   ( x_0, x_1, x_2, x_3) \in \mathbb{L}^4 : -x_0^2 + x_1^2 + x_2^2 + x_3^2 = 1 \}
	\] 
	with metric induced from $\mathbb{L}^4$ and identify $\S_1^3=SL_2(\C)/SU_{1,1}$ as in \cite[\textsection 0]{FRUYY09}. By \cite[Proposition 1.7]{Fujimori06} (see also \cite[Theorem 1.9]{Fujimori06}), the  canonical projection 
\[
		\pi_S  \colon SL_2 ( \C) \to \S_1^3, \quad \pi_S (A) = A \left(\begin{array}{cc}
	1&0\\
	0&-1
\end{array} \right)   \overline{A}^T
\]
takes holomorphic null curves in $SL_2 ( \C)$ into \emph{$\CMC$ faces} in $\S_1^3$ in the sense of Fujimori \cite[Def.\ 1.4]{Fujimori06}; see also Fujimori et al. \cite[Def.\ 1.1]{FRUYY09}. Moreover, as shown by Yu in \cite[p.\ 2999]{Yu97}, complete holomorphic null curves in $SL_2(\C)$ project to weakly complete $\CMC$ faces in $\S_1^3$ in the sense of \cite[Def.\ 1.3]{FRUYY09}. Conversely, every simply connected $\CMC$ face in $\S_1^3$ lifts to a holomorphic null curve in $SL_2 ( \C)$ \cite[Theorem 1.9]{Fujimori06}. Finally, $\pi_S |_{SL_2 ( \C) \setminus \{ z_{11} = 0\}} \colon SL_2 ( \C) \setminus \{ z_{11} = 0\} \to \S_1^3$ is easily seen to be surjective. Therefore, reasoning as in the proof of Corollary  \ref{cor::intbryant} we can establish the following analogous result for $\CMC$ faces in $\S_1^3$.

%
%
\begin{corollary}
Let $M$ be an open Riemann surface, $\Lambda\subset M$ be a closed discrete subset, and $f \colon U \to \S_1^3$ be a  $\CMC$ face on an open neighborhood $U \subset M$ of $\Lambda$. For any map $k \colon \Lambda \to \N$ there exists a weakly complete  $\CMC$ face  $\wt f \colon M \to \S_1^3$ agreeing with $f$ to order $k(p)$ at every point $p \in \Lambda$. 
In particular, there is a weakly complete $\CMC$ face  $M \to \S_1^3$ with everywhere dense image.
\end{corollary}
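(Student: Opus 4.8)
The plan is to run, almost verbatim, the argument that proves Corollary~\ref{cor::intbryant}, replacing the isometric projection $\pi_H\colon SL_2(\C)\to\h^3$ by the projection $\pi_S\colon SL_2(\C)\to\S_1^3$ recalled above and reassessing the role of properness. First I would shrink $U$ so that each of its connected components is a simply connected disc containing exactly one point of the closed discrete set $\Lambda$. On each such component the given $\CMC$ face $f$ lifts, by \cite[Theorem 1.9]{Fujimori06}, to a holomorphic null curve into $SL_2(\C)$; assembling these componentwise lifts yields a holomorphic null curve $F\colon U\to SL_2(\C)$ with $\pi_S\circ F=f$. Since $\pi_S|_{SL_2(\C)\setminus\{z_{11}=0\}}$ is surjective, I can arrange that $F(\Lambda)\subset SL_2(\C)\setminus\{z_{11}=0\}$ and then, shrinking $U$ once more, that $F(U)\subset SL_2(\C)\setminus\{z_{11}=0\}$ entirely.

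Next I would feed $F$ into Corollary~\ref{co:SL2C}, which supplies an almost proper (hence complete) holomorphic null curve $\wt F\colon M\to SL_2(\C)\setminus\{z_{11}=0\}$ with $\wt F-F$ vanishing to order $k(p)$ at every $p\in\Lambda$ (and injective if $F|_\Lambda$ is, which yields the final assertion of the corollary). Setting $\wt f:=\pi_S\circ\wt F\colon M\to\S_1^3$ produces a $\CMC$ face by \cite[Proposition 1.7]{Fujimori06} (see also \cite[Definition 1.1]{FRUYY09}). Because $\pi_S$ is a fixed quadratic real-analytic map of $\C^4$, composing with it preserves the order of contact, so $\wt f$ agrees with $f=\pi_S\circ F$ to order $k(p)$ at each point of $\Lambda$, as required.

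The one place where the argument genuinely departs from the proof of Corollary~\ref{cor::intbryant}---and the main point to get right---is the passage from completeness of the lift to completeness of the face. In the hyperbolic case $\pi_H$ is proper because its fiber group $SU(2)$ is compact, so almost properness of $\wt F$ descends directly to almost properness of $\wt\varphi$. Here the fiber group of $\pi_S$ is $SU_{1,1}$, which is \emph{non-compact}; hence $\pi_S$ fails to be proper and almost properness cannot be transferred. What does survive is completeness: by Yu \cite[p.~2999]{Yu97} a complete holomorphic null curve in $SL_2(\C)$ projects under $\pi_S$ to a \emph{weakly complete} $\CMC$ face, so the completeness of $\wt F$ furnished by Corollary~\ref{co:SL2C} yields weak completeness of $\wt f$. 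This is precisely why the conclusion must be stated with ``weakly complete'' in place of ``almost proper''.

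Finally, for the ``in particular'' statement I would fix a divergent sequence $a_1,a_2,\dots$ in $M$ without repetition and a dense sequence $c_1,c_2,\dots$ in $\S_1^3$, choose for each $j$ a $\CMC$ face germ $f_j$ near $a_j$ with $f_j(a_j)=c_j$, and take $\Lambda=\{a_j\}$, $k\equiv 1$, and $f$ the resulting germ on a neighborhood of $\Lambda$. The main assertion then produces a weakly complete $\CMC$ face $\wt f\colon M\to\S_1^3$ with $\wt f(a_j)=c_j$ for all $j$, whence $\overline{\wt f(M)}\supseteq\overline{\{c_j\}}=\S_1^3$.
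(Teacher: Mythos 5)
Your proposal is correct and follows essentially the same route as the paper, which establishes this corollary by ``reasoning as in the proof of Corollary \ref{cor::intbryant}'': lift $f$ componentwise via \cite[Theorem 1.9]{Fujimori06} into $SL_2(\C)\setminus\{z_{11}=0\}$, apply Corollary \ref{co:SL2C}, and project by $\pi_S$. In particular, you correctly pinpoint the one genuine difference from the hyperbolic case---$SU_{1,1}$ is non-compact, so $\pi_S$ is not proper and almost properness does not descend, forcing the use of Yu's result \cite[p.\ 2999]{Yu97} to convert completeness of $\wt F$ into weak completeness of $\wt f$---which is exactly the paper's reasoning.
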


%
%
\begin{remark}
A point of view which unifies $\CMC$ surfaces in $\h^3$ and $\CMC$ faces in $\S_1^3$ is that of holomorphic null (also called isotropic) curves in the nonsingular complex hyperquadric $\mathbb{Q}_3\subset\CP^4$, regarded as the Grassmannian of Lagrangian 2-planes of $\C^4$, with its standard holomorphic conformal structure. We refer to the paper by  Musso and Nicolodi \cite{MussoNicolodi2022IJM}, where the relations between holomorphic null curves in $\mathbb Q_3$ and a number of relevant classes of surfaces in Riemannian and Lorentzian spaceforms are discussed.
\end{remark}

%
%
\section{Completion of the proof of Theorem \ref{th:intro-Bryant}}\label{sec:bryant}
\noindent 
We begin with some preparations. Denote $\D=\{z\in\C\colon |z|<1\}$ and $\D^*=\D\setminus\{0\}$. Let $F=(F_{ij})\colon \D^*\to SL_2(\C) \subset \C^4$ be a holomorphic null curve extending meromorphically to $\D$ with an effective pole at the origin. In particular, the end of $F$ at the origin is proper, hence complete. According to the discussion preceding Corollary \ref{cor::intbryant} (in particular, see \eqref{eq::bproj}), we have that 
\begin{equation}\label{eq:varphipiF}
	\varphi:=\pi_H\circ F\colon \D^*\to \h^3
\end{equation}
is a conformal $\CMC$ immersion which is proper (hence complete) at the origin as well. Assume that $\varphi$ is not totally umbilic and let us look more carefully at the geometry of $\varphi$ near the origin, by following the developments in \cite{UmeharaYamada1993AM}.
We can write	
\[
				F^{-1}  d F = \left( \begin{array}{cc}
					g &  -g^2\\
					1& -g
				\end{array}\right) \omega ,
\]
where 
 \begin{equation}\label{eq::g}
	g= -\frac{dF_{12}}{dF_{11}} = - \frac{d F_{22}}{dF_{21}}\quad \text{is a meromorphic function on $\D$}
\end{equation}
(the so-called {\em secondary Gauss map} of $\varphi$; see \cite[Def.\ 1.2]{UmeharaYamada1996Ann}) and
\begin{equation}\label{eq::w}
			 	\omega = F_{11} d F_{21} - F_{21} d F_{11}\quad \text{is a meromorphic $1$-form on $\D$}
\end{equation}
which is holomorphic on $\D^*$. Up to replacing $\D$ by $a\D^*$ for some small $a>0$ if necessary we may assume that $g$ is holomorphic on $\D^*$. The {\em hyperbolic Gauss map} $G\colon\D^*\to \partial_\infty\h^3$ is then given by
\[
	G=\frac{dF_{11}}{dF_{21}}=\frac{dF_{12}}{dF_{22}},
\]
see \cite{Bryant1987Asterisque} or \cite[Eq.\ (1.12)]{UmeharaYamada1993AM},
and hence it extends to a meromorphic function on $\D$.  This means that $\varphi$ has a regular end at the origin \cite[Def.\ 1.4]{UmeharaYamada1993AM}.
Moreover, the Riemannian metric induced on $\D^*$ by the hyperbolic metric in $\h^3$ via $\varphi$ is
\begin{equation}\label{eq::metric}
	(1 + |g|^2 )^2 |\omega |^2, 
\end{equation} 
see \cite[Eq.\ (1.9)]{UmeharaYamada1993AM}. By \eqref{eq::g}, \eqref{eq::w}, and \eqref{eq::metric}, the end of $\varphi$ at the origin is of finite total curvature.
The
 \emph{Hopf differential} of $\varphi$ (see \cite{Bryant1987Asterisque} 
 or \cite[Eq.\ (1.10) and (2.2)]{UmeharaYamada1993AM}) is given by the holomorphic $2$-form
\begin{equation}\label{eq::hdif}
	Q = \omega dg =  \Big(    \sum_{j= -2}^{\infty}  \hat q_j z^j  \Big) dz^2\quad \text{on $\D^*$},
\end{equation}
where $\hat q_j \in \C$ for every integer $j\ge -2$.
By \eqref{eq::g}, \eqref{eq::w}, and \eqref{eq::metric}, and since $\varphi$ is complete at the origin, there are integers $\mu,\nu$ with $\min\{\nu,2\mu+\nu\}\le -1$ such that
\begin{equation}\label{w1}
g (z) = z^{\mu} \hat g (z)\quad\text{and}\quad
	\omega (z) = z^{\nu} \hat w (z) \, dz,
\end{equation}
where $\hat g, \hat w \colon \D \to \c$ are holomorphic functions which do not vanish at the origin; see \cite[Eq.\ (W.1)--(W.3)]{UmeharaYamada1993AM}.  
The {\em indicial equations} of $(g,\omega)$ are
\begin{equation}\label{eq::indeq}
	t^2 - ( \nu +1 ) t - \hat q_{-2} = 0 , \quad
	t^2 - ( 2 \mu + \nu + 1) t - \hat q_{-2} = 0;
\end{equation}
see  \cite[Eq.\ (e.1) and (e.2)]{UmeharaYamada1993AM}.
Denote their solutions by $\lambda_1, \lambda_1 - m_1 \in \C$ and $\lambda_2, \lambda_2 - m_2 \in \C$, respectively. In this situation, \cite[Theorem 2.4]{UmeharaYamada1993AM} ensures that $m_1, m_2$ lie in $\Z\setminus\{0\}$, and hence we may assume that both $m_1$ and $m_2$ are positive integers. (These numbers are nothing but the positive square roots of the discriminants of the equations in \eqref{eq::indeq}.)
The positive integer
\begin{equation}\label{eq:m}
m := \min \{ m_1, m_2\} \in \N
\end{equation}
is the {\em multiplicity} of the end of $\varphi$ at the origin; see \cite[Def.\ 5.1]{UmeharaYamada1993AM}. By virtue of \cite[Theorem 5.2]{UmeharaYamada1993AM}, the end of $\varphi$ at the origin is embedded if and only if $m= 1$.

From now on, we assume in addition that the given holomorphic null curve $F:\D^*\to SL_2(\C)$ is of the form 
\begin{equation}\label{eq:F=ToX|}
	F=\Tcal\circ X|_{\D^*}\colon\D^*\to SL_2(\C),
\end{equation}
where $\Tcal$ is the biholomorphism in \eqref{eq::defT} and $X = (X_1, X_2, X_3) \colon \D \to \C^3$ is a holomorphic null curve with $X_3^{-1}(0)=  \{ 0 \} \subset  \D$.  
In particular, $F$ has range in $SL_2(\C)\setminus\{z_{11}=0\}$. Since $X$ is a null immersion we have that 
\begin{equation}\label{eq:nula}
(X_1' - \igot X_2') (X_1 ' + \igot X_2 ') = - (X_3 ' )^2
\quad\text{and}\quad \sum_{j=1}^3|X_j'|\neq 0. 
\end{equation}
From the former condition above, \eqref{eq::g}, \eqref{eq::w}, and \eqref{eq::hdif}, it follows that
\begin{equation}\label{eq::Bdata}
	g  =  \frac{-X_3 ' X_3 }{X_1 ' - \igot X_2 '} - (X_1 + \igot  X_2), \quad
	\omega = \frac{ X_1 ' - \igot X_2 '}{ X_3^{2}} dz,
\end{equation}
and
\begin{equation}\label{eq::hopf}
	Q = \left( - \frac{X_3 ''}{X_3 }  + \frac{X_3 '}{X_3 } \cdot \frac{X_1 '' - \igot  X_2 ''}{X_1 ' - \igot X_2 '}  \right) dz^2.
\end{equation}
Recall the notation in \eqref{eq:orden} and set
\begin{equation}\label{eq:kl}
	c = \ord_0 (X_3) \ge 1 \quad\text{and} \quad	l = \ord_0 (X_1' - \igot X_2 ') \ge 0.
\end{equation}
By \eqref{eq:nula}, we have that $X_1' - \igot X_2'$ and $X_1 ' + \igot X_2'$ have no common zeros on $\D$,   
\begin{equation}\label{eq:l=02k-2}
	l\in\{0,2c-2\}, \quad \text{and}\quad \ord_0(X_1 ' + \igot X_2 ')=2c-2-l\in\{0,2c-2\}.
\end{equation}
On the other hand, \eqref{eq::hopf} and \eqref{eq:kl} give that $\hat{q}_{-2} = c(1-c +l)$. Thus, by \eqref{eq:l=02k-2} it turns out that
\begin{equation}\label{eq:q2=0}
	\hat{q}_{-2}=0 \text{ if and only if } c=1 \text{ (and hence $l=0$)}.
\end{equation} 
By \eqref{w1}, \eqref{eq::Bdata}, \eqref{eq:kl}, and \eqref{eq:l=02k-2} we have that
$\nu = l-2c \le -2$ and either $\mu = 0$ or $\mu \ge 2c -1-l  = -(\nu +1 ) \ge 1$.
In any case, $(2 \mu + \nu +1)^2 \geq (\nu+1)^2$, so \eqref{eq::indeq}, \eqref{eq:m}, and the fact that $\hat{q}_{-2} = c(1-c +l)$ ensure that 
$
	m = m_1 = \sqrt{(\nu +1)^2 + 4 \hat q_{-2}}=l+1.
$ 
This, \eqref{eq:kl}, and \eqref{eq:l=02k-2} show the following.
%
%
\begin{lemma}\label{lemma::m}
Let $\varphi  \colon \D^* \to \h^3$ be a conformal $\CMC$ immersion of the form \eqref{eq:varphipiF} with $F$ of the form \eqref{eq:F=ToX|}. If $\varphi$ is not totally umbilic, then the secondary Gauss map of $\varphi$ is holomorphic at the origin and 
the end of $\varphi$ at the origin is complete, of finite total curvature, regular, and of multiplicity $\ord_0 (X_1 ' - \igot X_2') +1$, which is odd. 
\end{lemma}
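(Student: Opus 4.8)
The plan is to assemble the preparatory computations recorded above and match them to the assertions of the statement; no new analytic input is required, since under the standing hypothesis that $\varphi$ is not totally umbilic the Weierstrass data \eqref{eq::Bdata}, the Hopf differential \eqref{eq::hopf}, and the value $\hat q_{-2}=c(1-c+l)$ have all been computed in advance, and \cite[Theorem 2.4]{UmeharaYamada1993AM} has already supplied the integrality of $m_1,m_2$.

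First I would dispatch the four qualitative assertions. Completeness of the end is already in force: $F$ has an effective pole at the origin because $X_3$ vanishes there while $F_{11}=1/X_3$ by \eqref{eq::defT}, so the end is proper and hence complete. Finite total curvature and regularity were established before the statement, respectively from \eqref{eq::g}, \eqref{eq::w}, \eqref{eq::metric} and from the meromorphic extension of the hyperbolic Gauss map $G$ to $\D$. For holomorphicity of the secondary Gauss map $g$ at the origin it suffices, by \eqref{w1}, to check $\mu=\ord_0(g)\ge 0$; this is exactly the dichotomy recorded above ($\mu=0$ or $\mu\ge-(\nu+1)\ge 1$), and it can also be read off directly from \eqref{eq::Bdata} together with \eqref{eq:l=02k-2}.

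The substance of the lemma is the multiplicity, which I would obtain in two steps. First, the minimum in \eqref{eq:m} is attained at $m_1$: the numbers $m_1,m_2$ are the positive square roots of the discriminants of the two indicial equations \eqref{eq::indeq}, and since $(2\mu+\nu+1)^2\ge(\nu+1)^2$ one has $m_2\ge m_1$, so $m=m_1$. Second, a direct computation using $\nu+1=l-2c+1$ and $\hat q_{-2}=c(1-c+l)$ collapses the first discriminant to a perfect square, $(\nu+1)^2+4\hat q_{-2}=(l+1)^2$, whence $m=m_1=l+1=\ord_0(X_1'-\igot X_2')+1$. Oddness then follows at once, since \eqref{eq:l=02k-2} confines $l$ to $\{0,2c-2\}$, both even.

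I do not anticipate a genuine obstacle, as every ingredient is prepared beforehand. The single point deserving care is the algebraic cancellation that turns the first discriminant into $(l+1)^2$; conceptually it is the null relation \eqref{eq:nula} that, through \eqref{eq:l=02k-2}, pins $l$ to an even value and thereby forces the multiplicity $m=l+1$ to be odd, which is the one feature of the statement that is not purely formal.
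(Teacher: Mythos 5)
Your proposal is correct and takes essentially the same approach as the paper: the paper's proof of the lemma consists of exactly the preparatory computations you assemble, concluding $m=m_1=\sqrt{(\nu+1)^2+4\hat q_{-2}}=l+1$ from the inequality $(2\mu+\nu+1)^2\ge(\nu+1)^2$ together with $\hat q_{-2}=c(1-c+l)$, with oddness because $l\in\{0,2c-2\}$ is even by the null relation. The qualitative assertions (completeness from the effective pole of $F$, finite total curvature and regularity from \eqref{eq::g}--\eqref{eq::metric}, and holomorphicity of the secondary Gauss map from $\mu\ge 0$) are likewise obtained in the paper exactly as you describe.
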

Since the holomorphic null curve $F=\Tcal\circ X|_{\D^*} \colon \d^* \to SL_2 ( \c)$ extends meromorphically to $\d$ with an effective pole at the origin,  \cite[Lemma 3.2 (i)]{BohlePeters2009} ensures that the end of the conformal $\CMC$ immersion $\varphi=\pi_H\circ F\colon \D^*\to \h^3$ at the origin is smooth (see Section \ref{sec:intro} or the definition in  \cite[p.\ 590]{BohlePeters2009}) if and only if $F' F^{-1} \colon \D \to \C^4$ has a pole of order 2 at the origin; i.e., $\min \{ \ord_0 (S_{ij}): i,j =1,2\} = -2$ where $F' F^{-1}=(S_{ij})$. A straightforward computation shows that
\[
			S_{11}= - S_{22} =\frac{-X_3 '} {X_3} - \frac{(X_1-\igot X_2)(X_1 ' + \igot X_2' )}{X_3^2},\qquad S_{12} = \frac{X_1 ' + \igot  X_2 '}{X_3^2},
\]
\[
S_{21} = (X_1 ' -\igot  X_2 ')-\frac{(X_1-\igot  X_2)^2(X_1 ' +\igot  X_2 ')}{X_3^2}-2\frac{(X_1-\igot  X_2)X_3'}{X_3}.
\]
It then follows from \eqref{eq:kl} and \eqref{eq:l=02k-2} that $\min\{\ord_0(S_{ij}): i,j=1,2\}=\ord_0(S_{12})=-(l+2)$. Thus, the end of $\varphi$ at the origin is smooth if and only if $l=0$. This, \eqref{eq:kl}, Lemma \ref{lemma::m}, and the aforementioned \cite[Theorem 5.2]{UmeharaYamada1993AM} prove the following. 
%
%
\begin{lemma}\label{lemma::smooth}
Let $\varphi  \colon \D^* \to \h^3$ be a conformal $\CMC$ immersion of the form \eqref{eq:varphipiF} with $F$ of the form \eqref{eq:F=ToX|}. If $\varphi$ is not totally umbilic, then the end of $\varphi$ at the origin is smooth if and only if it is of multiplicity $1$.
\end{lemma}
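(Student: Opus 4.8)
The plan is to reduce the smoothness of the end to the single numerical invariant $l = \ord_0(X_1' - \igot X_2')$ introduced in \eqref{eq:kl}, and then to read off the multiplicity from Lemma \ref{lemma::m}. By the Bohle--Peters criterion \cite[Lemma 3.2 (i)]{BohlePeters2009} recalled in the discussion above, the end of $\varphi$ at the origin is smooth if and only if the matrix $F'F^{-1} = (S_{ij})$ has a pole of order $2$ at the origin, that is, $\min\{\ord_0(S_{ij}) : i,j=1,2\} = -2$. So everything comes down to computing this minimal pole order in terms of $c = \ord_0(X_3)$ and $l$.

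First I would substitute the explicit expressions for $S_{11} = -S_{22}$, $S_{12}$, and $S_{21}$ displayed above and estimate the order at $0$ of each entry using \eqref{eq:kl}, \eqref{eq:l=02k-2}, and the trivial bound $\ord_0(X_1 - \igot X_2) \ge 0$. The key point is that $S_{12} = (X_1' + \igot X_2')/X_3^2$ is a single quotient, so no cancellation can occur and its order is exactly $(2c-2-l) - 2c = -(l+2)$. Every summand appearing in the other entries has order $\ge -(l+2)$: the contributions of type $X_3'/X_3$ have order $-1 \ge -(l+2)$ because $l \ge 0$, and the contributions built from $(X_1' + \igot X_2')/X_3^2$ multiplied by holomorphic factors of nonnegative order are again of order $\ge -(l+2)$. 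Hence $\min\{\ord_0(S_{ij})\} = \ord_0(S_{12}) = -(l+2)$, and the Bohle--Peters criterion yields that the end is smooth if and only if $l = 0$.

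To conclude, I would invoke Lemma \ref{lemma::m}, which, under the standing hypothesis that $\varphi$ is not totally umbilic, identifies the multiplicity of the end with $\ord_0(X_1' - \igot X_2') + 1 = l + 1$. Therefore the end has multiplicity $1$ exactly when $l = 0$, which is precisely the smoothness condition obtained in the previous paragraph; combining the two equivalences proves the lemma. If desired, \cite[Theorem 5.2]{UmeharaYamada1993AM} additionally identifies multiplicity $1$ with embeddedness of the end.

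The only real obstacle is the order bookkeeping in the second paragraph: one must verify that neither $S_{11}$ nor $S_{21}$, which are genuine sums of terms of different orders, dips below order $-(l+2)$, and that $S_{12}$ attains it. This stays transparent thanks to the fact from \eqref{eq:nula}--\eqref{eq:l=02k-2} that $X_1' - \igot X_2'$ and $X_1' + \igot X_2'$ share no zeros at the origin, which rigidly pins down the orders of all the building blocks and rules out any accidental lower-order cancellation.
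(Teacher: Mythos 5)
Your proposal is correct and follows essentially the same route as the paper: the Bohle--Peters criterion applied to $F'F^{-1}=(S_{ij})$, the order bookkeeping showing $\min\{\ord_0(S_{ij})\}=\ord_0(S_{12})=-(l+2)$ (so smoothness is equivalent to $l=0$), and then Lemma \ref{lemma::m} to identify the multiplicity with $l+1$. Your extra care in checking that $S_{11}$ and $S_{21}$ cannot dip below order $-(l+2)$, using $\ord_0(X_1-\igot X_2)\ge 0$ and $l\ge 0$, is exactly the verification the paper leaves implicit in its phrase ``it then follows from \eqref{eq:kl} and \eqref{eq:l=02k-2}''.
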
	

%
%
\begin{proof}[Proof of Theorem \ref{th:intro-Bryant}]
Assume that $E\neq\varnothing$; otherwise the result follows from Corollary \ref{cor::intbryant}. Arguing as in the proof of Corollaries \ref{co:SL2C} and \ref{cor::intbryant}, up to passing to a smaller neighborhood $U$ of $\Lambda$ in $M$ if necessary, we may assume that $E\cap\overline U=\varnothing$ and $\varphi = \pi_H  \circ \Tcal \circ  X\colon U\to\h^3$ for a holomorphic null curve $X  \colon U \to \c^2\times\C^*$; see \eqref{eq::defT} and \eqref{eq::bproj}.
 Let $V=\bigcup_{p\in E} V_p$ be a domain which is a union of smoothly bounded holomorphic coordinate discs $V_p\ni p$, $p\in E$, with pairwise disjoint closures such that $\overline V\cap\overline U=\varnothing$, and let $z\colon V_p\to\D$ be a holomorphic coordinate on $V_p$ with $z(p)=0$, $p\in E$. We extend $X\colon U\to \C^2\times\C^*$ to $V$ as follows. For $p\in E$, we define
\begin{equation}\label{eq::xm1}
	X(z) =  \Big(  \frac{z}{2} - 2 \frac{z^3}{3}  \,,\,   \igot \Big(\frac{z}{2}  + 2 \frac{z^3}{3}  \Big)   \,,\,  z^2 \Big),\quad z\in V_p,
\end{equation}
if $m(p)=1$, 
while if $m(p)>1$ then we define
\begin{equation}\label{eq::xm}
	X(z) =  \Big(  \frac{z^{m(p)}}{m(p)} -  \alpha(p)^2 z \,,\,   \igot \Big( \frac{z^{m(p)}}{m(p)}  + \alpha(p)^2 z \Big)  \,,\, 2 z^{\alpha(p)} \Big),\quad z\in V_p,
\end{equation}
where $\alpha(p) = (m(p) + 1)/2 \in \n$ (recall that $m(p)$ is odd for all $p\in E$).
This gives a holomorphic null curve $X=(X_1,X_2,X_3)\colon U\cup V\to\C^3$ such that  $X_3^{-1} (0) = E$ and
\begin{equation}\label{eq::condm} 
	\ord_p (X_1' -\igot X_2') = m(p)-1\quad \text{for all $p\in E$}.
\end{equation}
Extend the given map $k\colon \Lambda\to\N$ to $E$ by setting $k(p)=m(p)+1\ge 2$ for all $p\in E$. 
Theorem \ref{th::Runge} applied to $X \colon U \cup V\to \C^3$ and $k \colon \Lambda \cup E\to \n$
furnishes us with a holomorphic null curve $\wt X = (\wt X_1, \wt X_2, \wt X_3) \colon M  \to  \C^3$ satisfying the following:
\begin{enumerate}[label=(\Roman*)]
		\item \label{contact} $\wt X - X$ vanishes to order $k (p)$ at every point $p \in \Lambda \cup E$.
		\item\label{zeros}  $\wt X_3^{-1} (0) = X_3^{-1} (0) = E$.
		\item \label{ap} $( 1, \wt X_1, \wt X_2 )/\wt X_3\colon M\setminus E\to \C^3$ is an almost proper map.
\end{enumerate}
We claim that the conformal $\CMC$ immersion  
	\[
	\psi := \pi_H \circ \Tcal \circ \wt X |_{M \setminus E} \colon  M \setminus E \to \h^3
	\]
satisfies the conclusion of the theorem; note that it is well defined by \ref{zeros}. Indeed, condition \ref{prop1} in the statement of the theorem is implied by \ref{contact}. By \eqref{eq::condm}, \ref{contact}, and \ref{zeros} we have that, locally around each point $p\in E$,  $\psi$ is of the form \eqref{eq:varphipiF} with $F$ of the form \eqref{eq:F=ToX|}, 
and $\ord_p (\wt X_1' -\igot \wt X_2') = m(p)-1$. Further, by \eqref{eq::hdif}, \eqref{eq:kl}, \eqref{eq:q2=0}, \eqref{eq::xm1}, \eqref{eq::xm}, \ref{contact}, and the fact that $\alpha(p)\ge 2$ for all $p\in E\setminus m^{-1}(1)$, we have that the Hopf differential of $\psi$ is not identically zero; equivalently, $\psi$ is not totally umbilic.  Lemmas \ref{lemma::m} and \ref{lemma::smooth} then imply conditions \ref{prop2} and \ref{prop3}. Finally, \ref{ap} ensures that the holomorphic null curve $ \Tcal \circ \wt X |_{M \setminus E} \colon  M \setminus E \to SL_2(\C)$ is almost proper, and hence $\psi\colon M\setminus E \to\h^3$ is almost proper as well by properness of $\pi_H$.
 \end{proof}

\begin{remark}
The proof we have given provides a conformal $\CMC$ immersion $\psi\colon M\setminus E\to\h^3$ satisfying the conclusion of Theorem \ref{th:intro-Bryant} which, in addition, lifts to a holomorphic null curve $M\setminus E\to SL_2(\C)$ with range in $SL_2(\C)\setminus\{z_{11}=0\}$.
\end{remark}


\subsection*{Acknowledgements}
Alarc\'on and Hidalgo are partially supported by the State Research Agency (AEI) via the grant no.\ PID2020-117868GB-I00 and the ``Maria de Maeztu'' Excellence Unit IMAG, reference CEX2020-001105-M, funded by MCIN/AEI/10.13039/501100011033/, Spain. 

Castro-Infantes is partially supported by the grant PID2021-124157NB-I00 funded by MCIN/AEI/10.13039/501100011033/ ‘ERDF A way of making Europe’, Spain; and by Comunidad Aut\'{o}noma de la Regi\'{o}n de Murcia, Spain, within the framework of the Regional Programme in Promotion of the Scientific and Technical Research (Action Plan 2022), by Fundaci\'{o}n S\'{e}neca, Regional Agency of Science and Technology, REF, 21899/PI/22.

We thank the anonymous referee for directing our attention to the paper \cite{MussoNicolodi2022IJM}. 



\noindent Antonio Alarc\'{o}n

\noindent Departamento de Geometr\'{\i}a y Topolog\'{\i}a e Instituto de Matem\'aticas (IMAG), Universidad de Granada, Campus de Fuentenueva s/n, E--18071 Granada, Spain.

\noindent  e-mail: {\tt alarcon@ugr.es}

\bigskip
\noindent Ildefonso Castro-Infantes

\noindent Departamento de Matem\'aticas, Universidad de Murcia, C. Campus Universitario, 9, 30100 Murcia, Spain.

\noindent  e-mail: {\tt ildefonso.castro@um.es }

\bigskip
\noindent Jorge Hidalgo

\noindent Departamento de Geometr\'{\i}a y Topolog\'{\i}a e Instituto de Matem\'aticas (IMAG), Universidad de Granada, Campus de Fuentenueva s/n, E--18071 Granada, Spain.

\noindent  e-mail: {\tt jorgehcal@ugr.es}

\end{document}